\documentclass [12pt]{article}
\usepackage{amssymb,amsmath,amsthm}
\usepackage{color}

\DeclareMathOperator{\spn}{span}

\usepackage[cp1250]{inputenc}
\usepackage[active]{srcltx}

\newtheorem{thm}{Theorem}[section]
\newtheorem{prop}[thm]{Proposition}
\newtheorem{lem}[thm]{Lemma}
 \newtheorem{rem}[thm]{Remark}
 \newtheorem{cor}[thm]{Corollary}

\newtheorem{ex}[thm]{Example}
\DeclareMathOperator{\supp}{supp}

\newcommand{\C}{\mathbb{C}}
\newcommand{\R}{\mathbb{R}}

\newcommand{\N}{\mathbb{N}}

\newcommand{\la}{\lambda}

\long\def\comment#1{{}}

 \title{Special N-extremal solutions to indeterminate moment problems}
 \author{Christian Berg and Ryszard Szwarc}

\begin{document}

 \maketitle


\begin{abstract}   For an N-extremal solution $\mu$ to an indeterminate moment problem it is known by a theorem of M. Riesz that the measure $(1+x^2)^{-1}d\mu(x)$ is determinate. For $0<\alpha<1$ we show by contradiction that there exist indeterminate N-extremal solutions $\mu$ such that $(1+x^2)^{-\alpha}d\mu(x)$ is determinate, and there exist also  indeterminate N-extremal solutions $\mu$ such that $(1+x^2)^{-\alpha}d\mu(x)$ is indeterminate. Explicit examples of such measures are so far only known when $\alpha=1/2$.

For indeterminate Stieltjes moment problems and for N-extremal solutions $\mu$,  we show that $(1+x^2)^{-1/2}d\mu(x)$ is indeterminate except when $\mu=\mu_F$ is the Friedrichs solution in case of which $(1+x^2)^{-1/2}d\mu_F(x)$ is determinate. 

We identify the Friedrichs and Krein solutions for some indeterminate Stieltjes moment problems.
\end{abstract}

{\bf Mathematics Subject Classification}: 44A60.

{\bf Keywords}: Indeterminate moment problems, N-extremal solutions, Krein and Friedrichs solutions.

\section{Introduction and main results}

Let $\ell^2$ denote the Hilbert space of square summable complex  sequences $c=(c_n)_{n\ge 0}$. The standard orthonormal basis of $\ell^2$ is denoted $e_n, n=0,1,\ldots$. Furthermore,
\begin{equation}\label{eq:F}
\mathcal F:=\spn\{e_n \mid n\ge 0\}
\end{equation}
is the dense subspace of sequences with only finitely many non-zero terms.

Let $\mathcal M^*$ denote the set of positive measures on $\R$ with infinite support and moments of any order. For $\mu\in\mathcal M^*$ the corresponding moment sequence  $s=(s_n)_{n\ge 0}$ is defined by
\begin{equation}\label{eq:mom}
s_n=\int x^n\,d\mu(x),\quad n\ge 0.
\end{equation}
A sequence $s=(s_n)_{n\ge 0}$ of real numbers is a moment sequence of the form \eqref{eq:mom} if and only if $s$ is positive definite  in the sense that
\begin{equation*}
\sum_{j,k=0}^\infty s_{j+k}c_j\overline{c_k}>0\;\mbox{for all}\;c\in\mathcal F, c\neq 0. 
\end{equation*}
This is the famous theorem of Hamburger, see \cite{Sch}.

A measure $\mu\in\mathcal M^*$ and the corresponding moment sequence $s$ is called {\it determinate} if there is only one measure $\mu\in\mathcal M^*$ such that \eqref{eq:mom}
holds, and {\it indeterminate} if there are more than one such measure. In the indeterminate case the set $V=V(s)$ of measures $\mu\in \mathcal M^*$  satisfying \eqref{eq:mom} is an infinite convex set. 

Let $\mathcal M_d^*$ resp. $\mathcal M_i^*$ denote the sets of determinate resp. indeterminate measures in $\mathcal M^*$.

The following elementary observations are used throughout the paper, and $\C[x]$ denotes the set of polynomials in $x$ with complex coefficients:

\begin{lem}\label{thm:elem} Let $\mu,\nu\in\mathcal M^*$ satisfy $\mu\le \nu$.
\begin{enumerate}
\item[(i)] $\nu\in\mathcal M_d^*\implies \mu\in\mathcal M_d^*$.
\item[(ii)] $\C[x]\,\mbox{dense in}\; L^2(\nu)\implies \C[x]\,\mbox{dense in}\; L^2(\mu)$.
\end{enumerate}
\end{lem}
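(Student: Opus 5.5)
For (i) I argue by contraposition and perturb $\nu$. Suppose $\mu\le\nu$ and $\mu$ is indeterminate. Pick $\mu'\in\mathcal M^*$ with the same moments as $\mu$ but $\mu'\ne\mu$. Define
\[
\nu':=\nu-\mu+\mu'.
\]
We have $\nu-\mu\ge 0$, and $\nu-\mu$ has moments of all orders since $\int |x|^n\,d(\nu-\mu)\le\int |x|^n\,d\nu<\infty$. Hence $\nu'$ is a positive measure with moments of all orders. Its support is infinite because $\nu'\ge\mu'$ and $\mu'$ has infinite support, so $\nu'\in\mathcal M^*$. For every $n$,
\[
\int x^n\,d\nu'=\int x^n\,d\nu-\int x^n\,d\mu+\int x^n\,d\mu'=\int x^n\,d\nu .
\]
Moreover $\nu'\ne\nu$, since $\nu'-\nu=\mu'-\mu\ne0$. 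So $\nu$ is indeterminate, which proves (i). The only point needing care is that $\nu-\mu$ is a genuine positive measure with finite moments, and that follows directly from $\mu\le\nu$.

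For (ii) I use the Radon--Nikodym derivative together with a sup-norm approximation. Since $\mu\le\nu$, we have $\mu\ll\nu$ and $\mu=h\nu$ with $0\le h\le1$ $\nu$-a.e. For $f\in L^2(\nu)$,
\[
\int|f|^2\,d\mu=\int |f|^2h\,d\nu\le\int|f|^2\,d\nu .
\]
Thus the identity map $\mathcal L^2(\nu)\to\mathcal L^2(\mu)$ on functions is a contraction. Now fix $g\in L^2(\mu)$ and $\varepsilon>0$. Bounded continuous functions with compact support are dense in $L^2(\mu)$, because $\mu$ is a finite Radon measure on $\R$. So choose such a $\varphi$ with $\|g-\varphi\|_{L^2(\mu)}<\varepsilon$. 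Then $\varphi\in L^2(\nu)$, and by hypothesis there is a polynomial $p$ with $\|\varphi-p\|_{L^2(\nu)}<\varepsilon$. By the contraction estimate, $\|\varphi-p\|_{L^2(\mu)}<\varepsilon$, and therefore $\|g-p\|_{L^2(\mu)}<2\varepsilon$.

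The main subtlety in (ii) is that not every $g\in L^2(\mu)$ lies in $L^2(\nu)$. This is why the argument first passes through the dense class $C_c(\R)$, which is contained in both spaces. Everything else is routine.
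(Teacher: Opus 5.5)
Your proof is correct in both parts, including the care you take in (ii) to pass through a class contained in both $L^2$ spaces. The paper states this lemma as an elementary observation without proof; your perturbation $\nu'=\nu-\mu+\mu'$ for (i) is the standard argument, and your use of the dense class $C_c(\R)$ in (ii) is the same device the paper cites for the companion translation statement, Lemma~\ref{thm:trans}(ii).
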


For $a\in\R$ let $\kappa_a$ denote the translation of $\R$ given by $\kappa_a(x)=x+a$. 

\begin{lem}\label{thm:trans} For $\mu\in\mathcal M^*$ and an arbitrary translation $\kappa_a$ we have
\begin{enumerate}
\item[(i)] $\mu\in\mathcal M_d^* \;(\mbox{resp.}\; \mathcal M_i^*)\implies \kappa_a(\mu)\in\mathcal M_d^*\;(\mbox{resp.}\; \mathcal M_i^*)$.
\item[(ii)] $\C[x]\,\mbox{dense in}\; L^2(\mu)\implies \C[x]\,\mbox{dense in}\; L^2(\kappa_a(\mu))$.
\end{enumerate}
\end{lem}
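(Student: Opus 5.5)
The plan is to observe that translation by $\kappa_a$ induces a bijection $\nu\mapsto\kappa_a(\nu)$ of $\mathcal M^*$ onto itself, with inverse $\kappa_{-a}$, which respects moment equality. It also carries polynomials to polynomials via the composition operator $p\mapsto p\circ\kappa_{-a}$. Everything then follows by transport of structure.

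First I note that $\kappa_a(\mu)$ has infinite support, since $\kappa_a$ is a bijection of $\R$. It also has moments of all orders, because
\[
\int x^n\,d\kappa_a(\mu)(x)=\int (x+a)^n\,d\mu(x)=\sum_{k=0}^n\binom nk a^{n-k}s_k .
\]
The key consequence of this binomial identity is that the moment sequence of $\kappa_a(\nu)$ is determined by the moment sequence of $\nu$ alone. Hence if $\nu_1,\nu_2\in\mathcal M^*$ have the same moments, so do $\kappa_a(\nu_1)$ and $\kappa_a(\nu_2)$, and conversely, by applying $\kappa_{-a}$.

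For (i), suppose $\mu\in\mathcal M_d^*$ and let $\sigma\in\mathcal M^*$ have the same moments as $\kappa_a(\mu)$. Then $\kappa_{-a}(\sigma)$ has the same moments as $\mu$. By determinacy $\kappa_{-a}(\sigma)=\mu$, so $\sigma=\kappa_a(\mu)$, and $\kappa_a(\mu)$ is determinate. For the indeterminate case, take $\mu$ indeterminate and pick $\nu\ne\mu$ with the same moments. Then $\kappa_a(\nu)\ne\kappa_a(\mu)$, because $\kappa_a$ is injective on measures, and the two have equal moments, so $\kappa_a(\mu)$ is indeterminate. Alternatively this follows from the determinate case applied with $-a$, since $\mathcal M^*=\mathcal M_d^*\cup\mathcal M_i^*$ is a disjoint union.

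For (ii), the map $U:L^2(\mu)\to L^2(\kappa_a(\mu))$ given by $Uf=f\circ\kappa_{-a}$ is unitary. Indeed,
\[
\int|f(x-a)|^2\,d\kappa_a(\mu)(x)=\int|f|^2\,d\mu ,
\]
and $U$ has inverse $g\mapsto g\circ\kappa_a$. Moreover $U$ maps $\C[x]$ onto $\C[x]$, since $p(x-a)$ is again a polynomial of the same degree, and similarly for the inverse. A unitary operator carries dense subspaces to dense subspaces, so density of $\C[x]$ in $L^2(\mu)$ gives density of $U(\C[x])=\C[x]$ in $L^2(\kappa_a(\mu))$.

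There is no real obstacle here. The only point needing care is to state explicitly that moment equality is preserved in both directions. That is exactly the binomial triangular relation above, which is invertible because it has the same form with $-a$.
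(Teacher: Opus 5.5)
Your proof is correct and takes essentially the paper's route. For (ii) the paper only remarks that $C_c(\R)$ is dense in every $L^2(\mu)$ and that $C_c(\R)$ and $\C[x]$ are translation invariant, which is the same change-of-variables argument applied to a dense class; your unitary $Uf=f\circ\kappa_{-a}$ transports all of $L^2(\mu)$ at once and makes the $C_c(\R)$ step unnecessary, and your binomial moment-transport argument correctly settles (i), which the paper leaves unproved as elementary.
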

 
Concerning the proof of (ii) we use that the set of continuous functions with compact support $C_c(\R)$ is dense in $L^2(\mu)$ for any $\mu\in\mathcal M^*$ and that $C_c(\R)$ and $\C[x]$ are invariant under translations.

To $\mu\in\mathcal M^*$  there is a corresponding sequence $(p_n)_{n\ge 0}$ of orthonormal polynomials satisfying
\begin{equation}\label{eq:orth}
\int p_n(x)p_m(x)\,d\mu(x)=\delta_{n,m},
\end{equation}
uniquely determined by \eqref{eq:orth}, if $p_n$ is required to be a polynomial of degree $n$ with positive leading coefficient. Clearly $p_0=s_0^{-1/2}$. In the indeterminate case the polynomials $p_n$ are independent of $\mu\in V$ and they satisfy the three term recurrence relation
\begin{equation}\label{eq:3term}
xp_n(x)=a_np_{n+1}(x)+b_np_n(x)+a_{n-1}p_{n-1}(x), n\ge 0,\quad a_{-1}:=0,
\end{equation}
where
$$
a_n=\int xp_n(x)p_{n+1}(x)\,d\mu(x)>0,\quad b_n=\int xp_n(x)^2\,d\mu(x)\in\R,\quad n\ge 0.
$$
The corresponding Jacobi matrix is the tridiagonal matrix of the  form
\begin{equation}\label{eq:Jac}
J=\begin{pmatrix}
b_0 & a_0 & 0 & \hdots\\
a_0 & b_1 & a_1 & \hdots\\
0 & a_1 & b_2 & \hdots\\
\vdots &\vdots & \vdots & \ddots
\end{pmatrix}.
\end{equation}

It is easy to see that the proportional measures $\la\mu, \la>0$ lead to the same Jacobi matrix $J$, and the well-known Theorem of Favard (see \cite[Theorem 5.14]{Sch}) states that any matrix of the form \eqref{eq:Jac} with $a_n>0, b_n\in\R$ comes from a unique  moment sequence  $(s_n)$ as above, normalized such that $s_0=1$. In the following we shall always assume that this normalization holds, and consequently the solutions $\mu$ of \eqref{eq:mom} are probability measures  and $p_0=1$.

For later use we need the polynomials $(q_n)_{n\ge 0}$ of the second kind defined by
\begin{equation}\label{eq:q}
q_n(z)=\int\frac{p_n(z)-p_n(x)}{z-x}\,d\mu(x),\quad z\in\C,\mu\in V.
\end{equation}
The Jacobi matrix acts as a symmetric operator  in the Hilbert space $\ell^2$ with domain  $\mathcal F$ given in \eqref{eq:F}, and the action is multiplication of the matrix $J$ by $c\in\mathcal F$ considered as a column, i.e.,
\begin{equation}\label{eq:J} 
(Jc)_n:=a_{n-1}c_{n-1}+b_n c_n+a_n c_{n+1},\quad n\ge 0.
\end{equation}

 The Jacobi operator associated with $J$ is by definition the closure $(T,D(T))$ of the symmetric operator $(J,\mathcal F)$.

It is a classical fact that the closed symmetric operator $(T,D(T))$ has deficiency indices  either $(0,0)$  or $(1,1)$. These cases occur precisely if the  moment sequence \eqref{eq:mom} is determinate or indeterminate. See \cite{Ak}, \cite{S}, \cite{Sch} for details. Recent results about indeterminate moment problems can be found in 
\cite{B:S1}, \cite{B:S2}, \cite{B:S3}, \cite{B:S4}.

The domain $D(T)$ of the Jacobi operator has been studied in \cite{B:S5} and \cite{B:S6}.

In the indeterminate case the set $V$ of solutions to \eqref{eq:mom} is given by the Nevanlinna parametrization cf. \cite[Section 7.4]{Sch}  
\begin{equation}\label{eq:Nev}
\int\frac{d\mu_{\varphi}(x)}{x-z}=-\frac{A(z)+\varphi(z)C(z)}{B(z)+\varphi(z)D(z)},\quad z\in\C\setminus\R,
\end{equation}
where $\varphi:\C\setminus\R\to\C$ is a Pick function or $\infty$. When $\varphi$ is a real constant $t$ or $\infty$, we get the so-called N-extremal measures $\mu_t,t\in\R^*:=\R\cup\{\infty\}$. In  \cite{S} they are called von Neumann solutions because they are affilated with the self-adjoint extensions in $\ell^2$ of the Jacobi operator
$(T,D(T))$. The functions $A,B,C,D$ are entire functions defined in terms of the polynomials $p_n,q_n$.

The set of indeterminate and N-extremal measures is denoted $\mathcal M_{i,0}^*$.
We have chosen this symbol because the measures in $\mathcal M_{i,0}^*$ are called
of order 0 in \cite[p. 115]{Sch}.

Marcel Riesz proved the following theorem in 1923, see \cite{Ri}:

\begin{thm}\label{thm:Riesz} Let $\mu\in\mathcal M^*$. The polynomials are dense in $L^2(\mu)$ if and only if the measure $(1+x^2)^{-1}d\mu(x)$ is determinate and if and only if $\mu$ is either determinate or indeterminate and N-extremal.
\end{thm}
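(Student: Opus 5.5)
We will prove the two equivalences separately, writing $\nu:=(1+x^2)^{-1}d\mu(x)$. The natural bridge is the closed subspace $\overline{\C[x]}$ of $L^2(\mu)$.

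\emph{Step 1: polynomials dense in $L^2(\mu)$ $\iff$ $\nu$ determinate.} Suppose first that $\nu$ is indeterminate, and take $\sigma\neq\nu$ with the same moments as $\nu$. Then $(1+x^2)\sigma$ has the same moments as $\mu$; these are determined by $\int x^n(1+x^2)\,d\sigma$. The signed measure $\tau=\nu-\sigma$ annihilates all polynomials. We would like to produce from $\tau$ a nonzero element of $L^2(\mu)$ orthogonal to $\C[x]$, using functions of the form $1/(x-z)$. Rather than pursuing this directly, we use the classical characterization via Cauchy kernels.

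\emph{Forward direction.} If $\C[x]$ is dense in $L^2(\mu)$, then every $f\in L^2(\mu)$ is a limit of polynomials. Suppose $\sigma$ has the moments of $\nu$. For a polynomial $p$, write $p(x)=p(i)+(x-i)r(x)$ with $r$ a polynomial. Then $\int p\,d\nu=\int p\,d\sigma$ for all $p$. To deduce $\sigma=\nu$, it suffices to show that the Stieltjes transforms agree. The key identity is
\[
\int\frac{d\nu}{x-z}=\int\frac{1}{(x-z)(1+x^2)}\,d\mu ,
\]
and $1/(x-z)\in L^2(\mu)$ is approximable by polynomials $p_k$ in $L^2(\mu)$. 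Then $\int p_k\cdot\frac{1}{1+x^2}\,d\mu$ is a limit of quantities that are moments of $\nu$ combined linearly, hence equal to the corresponding quantities for $\sigma$. To control the $\sigma$ side we need $L^2$ bounds for $\sigma$. We have $(1+x^2)\sigma\in V(s)$, so $\int|p_k-p_l|^2\,d\big((1+x^2)\sigma\big)=\int|p_k-p_l|^2\,d\mu\to0$. Hence $p_k$ is Cauchy in $L^2((1+x^2)\sigma)$. Its limit is $1/(x-z)$ $\sigma$-a.e.; this is the delicate point, which we handle by also approximating $(x-\bar z)^{-1}$ and using orthogonality arguments. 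Cauchy–Schwarz with the weight $(1+x^2)^{-1}$ then gives equality of the Stieltjes transforms, so $\sigma=\nu$.

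\emph{Converse.} If $\C[x]$ is not dense, take $0\neq g\perp\C[x]$ in $L^2(\mu)$ with $g$ real; we may do so since $\mu$ is real. Consider $\sigma_\pm=(1\pm\epsilon(1+x^2)^{1/2}\ldots)$. More precisely, we use $d\sigma=(1+\epsilon h)\,d\nu$, where $h=(1+x^2)\cdot$ (a bounded real function built from $g$). The cleaner approach is the standard one from \cite{Ak}: the nonzero orthogonal $g$ yields, via the Nevanlinna parametrization \eqref{eq:Nev}, that $\mu$ is indeterminate and not N-extremal. So this direction reduces to Step 2.

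\emph{Step 2: density $\iff$ $\mu$ determinate or N-extremal.} We use the spectral picture. The map $e_n\mapsto p_n$ identifies $\ell^2$ with $\overline{\C[x]}\subseteq L^2(\mu)$. Under this identification $T$ becomes multiplication by $x$ restricted to polynomials, and multiplication by $x$ on $L^2(\mu)$ is a self-adjoint extension of $T$. If $\C[x]$ is dense, this extension lives in $\ell^2$ itself, so it is a von Neumann self-adjoint extension. In the indeterminate case $\mu$ is then its spectral measure at $e_0$, i.e.\ some $\mu_t$. Conversely, the spectral measure of a self-adjoint extension $A$ of $T$ in $\ell^2$ has $\C[x]$ dense, because $e_0$ is cyclic for $A$: its resolvents $(A-z)^{-1}e_0$ lie in the closed span of $\{e_n\}$, which is all of $\ell^2$. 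In the determinate case, $T$ is itself self-adjoint and the same argument applies.

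We expect the main obstacle to be the converse of Step 1, namely showing that a non-N-extremal (indeterminate) $\mu$ gives an indeterminate $\nu$. Here we will use the fact that for indeterminate $s$ and $z\notin\R$, the function $1/(x-z)$ is not in $\overline{\C[x]}$ for non-N-extremal $\mu$. From this we construct, by modifying the Pick parameter, a second measure $\mu'\in V$ with $\int\frac{d\mu'}{1+x^2}$-moments equal to those of $\nu$. Concretely, two solutions $\mu,\mu'$ with $\int\frac{d\mu}{x\mp i}=\int\frac{d\mu'}{x\mp i}$ give $\nu'=(1+x^2)^{-1}\mu'$ with the same moments as $\nu$. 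Such a $\mu'$ exists precisely when $\varphi$ is nonconstant, by interpolation of Pick functions at $\pm i$.
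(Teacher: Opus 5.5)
The paper gives no proof of this statement: it is quoted from M.~Riesz \cite{Ri} and used as a black box. So your outline has to be judged on its own. It works, and it differs from the classical arguments, which stay in $L^2$ and work with the Cauchy kernels $1/(x-z)$ and their distance to $\C[x]$.

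Write (I) for density of $\C[x]$ in $L^2(\mu)$, (II) for determinacy of $\nu=(1+x^2)^{-1}\mu$, and (III) for ``$\mu$ determinate or N-extremal''. You get (I)$\iff$(III) by identifying N-extremal solutions with spectral measures at $e_0$ of self-adjoint extensions of $T$ inside $\ell^2$. You get (II)$\iff$(III) from the Nevanlinna parametrization, and that last paragraph is the best part of the proposal:
\begin{itemize}
\item every $\sigma$ with the moments of $\nu$ equals $(1+x^2)^{-1}\mu'$ for some $\mu'\in V$;
\item the moments of $(1+x^2)^{-1}\mu'$ are fixed by $s$ and the single number $\int d\mu'/(x-i)$ (divide $x^n$ by $1+x^2$);
\item by \eqref{eq:Nev} and $AD-BC=1$, that number is an injective M\"obius function of $\varphi(i)$.
\end{itemize}
Hence $\nu$ is determinate iff no other Pick function takes the value $\varphi(i)$ at $i$, i.e.\ iff $\varphi(i)\in\R\cup\{\infty\}$. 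Indeed, a Pick function with a real value at $i$ is a real constant. If instead $w=\varphi(i)\in\C_+$, the two distinct Pick functions $w$ and $\operatorname{Re}w+(\operatorname{Im}w)z$ both take the value $w$ at $i$. The determinate case is trivial since then $V=\{\mu\}$. Read with both directions, this gives (II)$\iff$(III) outright, so your Step~1 is superfluous. Your route is short given the machinery the paper already assumes; the classical route avoids the parametrization and the extension theory.

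Three things need fixing as written.
\begin{enumerate}
\item ``$\varphi$ nonconstant'' must mean ``$\varphi$ neither a real constant nor $\infty$''. The parameter identically equal to $w\in\C_+$ on the upper half-plane gives a non-N-extremal solution and has to be covered; the $\operatorname{Re}w+(\operatorname{Im}w)z$ example does that.
\item The ``delicate point'' in Step~1 is not settled by approximating $(x-\bar z)^{-1}$ plus unspecified orthogonality. If you keep Step~1, use that $|(x-z)p_k(x)-1|^2$ is a polynomial on $\R$:
\[
\int|(x-z)p_k-1|^2\,d\sigma=\int|(x-z)p_k-1|^2\,d\nu=\int\left|p_k-\frac{1}{x-z}\right|^2\frac{|x-z|^2}{1+x^2}\,d\mu\to0 .
\]
Then $\int|p_k-(x-z)^{-1}|^2\,d\sigma\le|\operatorname{Im}z|^{-2}\int|(x-z)p_k-1|^2\,d\sigma\to0$, so $\int p_k\,d\sigma\to\int d\sigma/(x-z)$ with no need to identify a limit in $L^2((1+x^2)\sigma)$.
\item In Step~2, saying the resolvent vectors lie in $\overline{\spn}\{e_n\}=\ell^2$ proves nothing. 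What you need is $e_n=p_n(A)e_0$, which holds because $A$ agrees with $J$ on $\mathcal F$. Then the cyclic subspace of $e_0$ is all of $\ell^2$, and the spectral unitary onto $L^2(\mu_A)$ maps the dense set $\mathcal F$ onto $\C[x]$.
\end{enumerate}
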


We summarize some of the properties of the N-extremal measures $\mu_t$, which can be found in \cite{Ak} and \cite{Sch}.

\begin{prop}\label{thm:supportNext}
(i) The  solution $\mu_t$ is a discrete measure with support equal to the countable zero set $\Lambda_t$ of the entire function $B(z)+tD(z)$, with the convention that $\Lambda_\infty$ is the zero set of $D$. We have $\Lambda_t\subset\R$ for $t\in\R^*$.

(ii) The support of two different N-extremal solutions are disjoint and interlacing. Each point $x_0\in\R$ belongs to the support of a unique N-extremal measure $\mu_t$, where
$t\in\R^*$ is given  as $t=-B(x_0)/D(x_0)$ if $D(x_0)\neq 0$ and $t=\infty$ if $D(x_0)=0$.

(iii) If $x_0\in\R$ belongs to the support of the N-extremal measure $\mu_t$, then the measure $\mu_t-\mu_t(x_0)\delta_{x_0}$ is determinate.
\end{prop}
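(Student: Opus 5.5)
The statement is Proposition~\ref{thm:supportNext}, which collects classical facts. The plan is to derive each part from the Nevanlinna parametrization \eqref{eq:Nev}, using the standard properties of the entire functions $A,B,C,D$: they are real on $\R$, and they satisfy $A(z)D(z)-B(z)C(z)=1$.

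\textbf{Part (i).} For real $t$, the Stieltjes transform $-(A+tC)/(B+tD)$ is meromorphic in $\C$. Its poles are contained in the zero set $\Lambda_t$ of $B+tD$. No zero of $B+tD$ is cancelled by a zero of $A+tC$. Indeed, at a common zero we would have
\[
(A+tC)D-(B+tD)C=AD-BC=1,
\]
while the left-hand side vanishes there, which is impossible. A Stieltjes transform of a positive measure that extends meromorphically across $\R$ comes from a discrete measure sitting exactly at the poles. So $\supp\mu_t=\Lambda_t$, with mass at $\lambda$ equal to the negative residue. Since $\int d\mu_t(x)/(x-z)$ is a Pick function, all its poles are real, so $\Lambda_t\subset\R$. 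For $t=\infty$ the same argument applies with $-C/D$.

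\textbf{Part (ii).} The sets $\Lambda_t$ are disjoint: if $B(x_0)+tD(x_0)=0=B(x_0)+sD(x_0)$ with $s\ne t$, then $B(x_0)=D(x_0)=0$, contradicting $AD-BC=1$. The same identity gives the existence and uniqueness of $t$ with $x_0\in\Lambda_t$, namely $t=-B(x_0)/D(x_0)$, or $t=\infty$ if $D(x_0)=0$. For interlacing, consider the real meromorphic function $-B/D$ on $\R$. Its derivative has a fixed sign, because $B'D-BD'$ is of one sign on $\R$; this follows from the Christoffel–Darboux type formula expressing it as a series $\sum_n |P_n(x)|^2>0$ in terms of the orthonormal polynomials. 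Hence $-B/D$ is strictly monotone between consecutive zeros of $D$ and runs through all of $\R$ there. Consequently each level set $\{-B/D=t\}$ has exactly one point between consecutive poles, and the zero sets interlace.

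\textbf{Part (iii).} Let $\nu=\mu_t-\mu_t(\{x_0\})\delta_{x_0}$. By Theorem~\ref{thm:Riesz}, $\C[x]$ is dense in $L^2(\mu_t)$. Since $\mu_t$ is indeterminate, $1/(x-x_0)$, suitably defined, lies in the closure of the polynomials. The simplest route is to show that $(x-x_0)^2\,d\nu(x)$ is N-extremal or determinate, using the density of $(x-x_0)\C[x]$ in the appropriate space. Then I would invoke the classical lemma that $\nu$ is determinate iff $\C[x]$ is dense in $L^2((1+x^2)d\nu)$; equivalently, I would use the M. Riesz criterion applied to $\nu$ viewed through $(x-x_0)^2\nu$.

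Concretely: if $\nu$ were indeterminate, then $\nu+c\,\delta_{x_0}$ would be indeterminate for every $c\ge0$, since adding a point mass preserves the existence of other solutions. But for $\nu$ indeterminate there is exactly one $c$ making $\nu+c\delta_{x_0}$ N-extremal, namely the maximal mass $\rho(x_0)=1/\sum_n p_n(x_0)^2$ (Christoffel function) for its own moment problem. We would then need to show that the density argument forces the polynomials to be dense in $L^2(\nu)$ too, contradicting Riesz's theorem. The cleanest way is this: since $\mu_t(\{x_0\})=\rho(x_0)>0$ equals the maximal point mass, the function $\delta_{x_0}$ lies in the $L^2(\mu_t)$-closure of $\C[x]$. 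So there are polynomials $P_k$ with $P_k\to 1_{\{x_0\}}$ in $L^2(\mu_t)$, which means $P_k\to0$ in $L^2(\nu)$ with $P_k(x_0)\to1$.

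This shows that evaluation at $x_0$ is not continuous on $\C[x]\subset L^2(\nu)$, i.e.\ $\sum_n p_n^{\nu}(x_0)^2=\infty$. For an indeterminate $\nu$ this sum is finite everywhere, so $\nu$ is determinate. The main obstacle is this last step: justifying that an indeterminate measure has $\sum_n|p_n(z)|^2<\infty$ for all $z$. This is classical (Akhiezer), and I would cite it.
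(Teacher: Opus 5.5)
The paper does not prove this proposition; it quotes it from Akhiezer and Schm\"udgen. Your argument therefore has to stand on its own, and it does.

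Parts (i) and (ii) follow the classical route: meromorphic continuation of the Stieltjes transform with Stieltjes inversion, $AD-BC=1$ to rule out cancellation and common zeros, and $B'D-BD'=\sum_n p_n^2>0$ on $\R$ for monotonicity. Two clean-ups are needed.
\begin{itemize}
\item As written, your monotonicity argument gives interlacing of $\Lambda_t$ with $\Lambda_\infty$ only. For two finite parameters $s\neq t$, add that $(B+tD)'(B+sD)-(B+tD)(B+sD)'=(s-t)(B'D-BD')$ also has a fixed sign. The same reasoning then applies to the pair $(B+tD,B+sD)$.
\item Drop the middle paragraph of (iii). The claim that exactly one $c$, namely $\rho(x_0)$, makes $\nu+c\delta_{x_0}$ N-extremal is false. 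All the $L^2(\nu+c\delta_{x_0})$-norms with $c>0$ are equivalent, so polynomial density, and hence N-extremality, cannot single out one $c>0$. The claim is also never used.
\end{itemize}

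Your final argument for (iii) is complete, and it differs from the standard one. The usual proof uses only Theorem~\ref{thm:Riesz}, in the form the paper applies repeatedly: $\nu$ is determinate iff $\C[x]$ is dense in $L^2((1+x^2)d\nu)$.
\begin{itemize}
\item The functional $f\mapsto f(x_0)$ is continuous on $L^2(\mu_t)$ because $x_0$ is an atom.
\item Hence polynomials vanishing at $x_0$ are dense in its kernel, which is $L^2(\nu)$.
\item Dividing by $x-x_0$ gives density of $\C[x]$ in $L^2((x-x_0)^2d\nu)$.
\item This is the same as density in $L^2((1+x^2)d\nu)$, because $x_0$ has positive distance from $\supp\nu$ by (i).
\item Theorem~\ref{thm:Riesz} then gives determinacy.
\end{itemize}
Your version instead imports the fact that $\sum_n|p_n(z)|^2<\infty$ for every $z\in\C$ in the indeterminate case. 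In exchange it needs neither discreteness of $\supp\mu_t$ nor the value of the mass, only that $x_0$ is an atom of a measure in which $\C[x]$ is dense. The appeal to $\rho(x_0)$ is superfluous: $1_{\{x_0\}}$ lies in the closure of $\C[x]$ simply because $\C[x]$ is dense in $L^2(\mu_t)$.
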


Putting $z=0$ in \eqref{eq:Nev}, which is possible when $0\notin \supp(\mu_t)$, leads to
\begin{equation}\label{eq:Npar0}
\int\frac{d\mu_t(x)}{x}=t,\quad t\in\R.
\end{equation}

We fix a number $0\le\alpha\le 1$ and divide $\mathcal M_i^*$  in two disjoint subsets:
\begin{eqnarray}\label{eq:A(alpha)}
A(\alpha)&:=&\{\mu\in \mathcal M_i^* \mid (1+x^2)^{-\alpha}d\mu(x) \in \mathcal M_i^*\},\\
\label{eq:B(alpha)}
B(\alpha)&:=&\{\mu\in \mathcal M_i^* \mid (1+x^2)^{-\alpha}d\mu(x) \in\mathcal M_d^*\}.
\end{eqnarray}
We have
$$
A(0)=\mathcal M_i^*,\; B(0)=\emptyset,\; A(1)=\mathcal M_i^*\setminus\mathcal M_{i,0}^*,\;
B(1)=\mathcal M_{i,0}^*.
$$
By Lemma~\ref{thm:elem} we clearly have: 
\begin{equation}\label{eq:incl}
0\le\alpha_1<\alpha_2\le 1\implies
A(\alpha_2)\subseteq A(\alpha_1),\; B(\alpha_1)\subseteq B(\alpha_2).
\end{equation}

Furthermore, for $\mu\in\mathcal M_{i,0}^*$ the following defining equation for $a(\mu)$ holds:
\begin{equation}\label{eq:a}
a(\mu)=\sup\{\alpha\in [0,1] \mid \mu\in A(\alpha)\}=\inf\{\beta\in [0,1]\mid \mu\in B(\beta)\}.
\end{equation}

There is no apparent relation between the moments of $\mu$ and $(1+x^2)^{-\alpha}d\mu(x)$ when $0<\alpha<1$.

\begin{prop}\label{thm:shift} Let $\mu\in\mathcal M_{i,0}^*$.

(a) Assume $a(\mu)<1$. For $0<\delta<1-a(\mu)$ we have
\begin{equation}\label{eq:a1}
a((1+x^2)^\delta d\mu(x))=a(\mu)+\delta.
\end{equation}

(b) Assume $0<a(\mu)$. For $0<\delta<a(\mu)$ we have
\begin{equation}\label{eq:b1}
a((1+x^2)^{-\delta} d\mu(x))=a(\mu)-\delta.
\end{equation}
\end{prop}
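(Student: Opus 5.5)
The plan is to verify first that the new measure $\nu$ again lies in $\mathcal M_{i,0}^*$, so that $a(\nu)$ is defined. Then we compute $a(\nu)$ through the identity
$$
(1+x^2)^{-\gamma}d\nu(x)=(1+x^2)^{-(\gamma\mp\delta)}d\mu(x).
$$
This identity turns membership of $\nu$ in $A(\gamma)$ into membership of $\mu$ in $A(\gamma\mp\delta)$. Two facts are used throughout.

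\begin{enumerate}
\item[(1)] If $\beta\in(a(\mu),1]$, then $\mu\in B(\beta)$. Indeed, by \eqref{eq:a} there is $\beta'<\beta$ with $\mu\in B(\beta')$, and $B(\beta')\subseteq B(\beta)$ by \eqref{eq:incl}.
\item[(2)] Symmetrically, if $\alpha\in[0,a(\mu))$, then $\mu\in A(\alpha)$.
\end{enumerate}

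Moreover, every measure $(1+x^2)^{\pm\delta}d\mu$ has moments of all orders, since $\mu$ does and $(1+x^2)^{\delta}\le 1+x^2$ for $\delta<1$. It also has the same infinite support as $\mu$, so it belongs to $\mathcal M^*$.

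(a) Let $\nu=(1+x^2)^\delta d\mu$. Since $\nu\ge\mu$ and $\mu$ is indeterminate, Lemma~\ref{thm:elem}(i) shows that $\nu$ is indeterminate. Next, $(1+x^2)^{-1}d\nu=(1+x^2)^{-(1-\delta)}d\mu$. Because $1-\delta>a(\mu)$, fact (1) gives that this measure is determinate. Theorem~\ref{thm:Riesz} then shows that $\nu$ is N-extremal, so $\nu\in\mathcal M_{i,0}^*$.

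Now take $\gamma\in[0,1]$.
\begin{enumerate}
\item[(i)] If $\gamma<\delta$, then $(1+x^2)^{-\gamma}d\nu\ge\mu$, which is indeterminate, so $\nu\in A(\gamma)$.
\item[(ii)] If $\gamma\ge\delta$, then $\nu\in A(\gamma)$ if and only if $\mu\in A(\gamma-\delta)$, where $\gamma-\delta\in[0,1-\delta]$.
\end{enumerate}
Hence
$$
a(\nu)=\delta+\sup\{\alpha\in[0,1-\delta]\mid \mu\in A(\alpha)\}.
$$
Since $a(\mu)<1-\delta$, restricting $\alpha$ to $[0,1-\delta]$ does not change the supremum, which is therefore $a(\mu)$. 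This gives \eqref{eq:a1}.

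(b) Let $\nu=(1+x^2)^{-\delta}d\mu$. Since $\delta<a(\mu)$, fact (2) gives $\mu\in A(\delta)$, so $\nu$ is indeterminate. Also $(1+x^2)^{-1}d\nu\le(1+x^2)^{-1}d\mu$, and the right-hand side is determinate by Theorem~\ref{thm:Riesz}. By Lemma~\ref{thm:elem}(i) the left-hand side is determinate as well, and Theorem~\ref{thm:Riesz} again shows $\nu\in\mathcal M_{i,0}^*$.

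Now take $\gamma\in[0,1]$.
\begin{enumerate}
\item[(i)] If $\gamma\le 1-\delta$, then $\nu\in A(\gamma)$ if and only if $\mu\in A(\gamma+\delta)$.
\item[(ii)] If $\gamma>1-\delta$, then $(1+x^2)^{-\gamma}d\nu\le(1+x^2)^{-1}d\mu$, which is determinate, so $\nu\in B(\gamma)$.
\end{enumerate}
Hence
$$
a(\nu)=\sup\{\alpha\in[\delta,1]\mid\mu\in A(\alpha)\}-\delta=a(\mu)-\delta,
$$
where the supremum equals $a(\mu)$ because $a(\mu)>\delta$, by fact (2). This gives \eqref{eq:b1}.

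The only delicate point is the boundary bookkeeping in passing from the supremum in \eqref{eq:a} to statements about individual $\alpha$. This is handled by facts (1) and (2) together with the strict inequalities $\delta<1-a(\mu)$ and $\delta<a(\mu)$.
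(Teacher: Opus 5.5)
Your proof is correct and takes the paper's route. You first show via Lemma~\ref{thm:elem} and Theorem~\ref{thm:Riesz} that $(1+x^2)^{\pm\delta}d\mu$ is again indeterminate and N-extremal, then read off $a$ from $(1+x^2)^{-\gamma}d\nu=(1+x^2)^{-(\gamma\mp\delta)}d\mu$. You supply the supremum bookkeeping that the paper calls ``easy'', and your fact (1) at $\beta=1-\delta$ replaces the paper's auxiliary $\varepsilon$ together with Lemma~\ref{thm:elem}(ii).
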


\begin{proof} $(a)$: Note that $(1+x^2)^\delta d\mu(x)\in\mathcal M_{i,0}^*$. It is indeterminate because it majorizes $\mu$. Choosing $\varepsilon>0$ so small that $\varepsilon+\delta<1-a(\mu)$ we know that $(1+x^2)^{-(a(\mu)+ \varepsilon)}d\mu(x)$ is determinate, and then $\C[x]$ is dense in $L^2((1+x^2)^{1-a(\mu)-\varepsilon}d\mu(x))$ and a fortiori in $L^2((1+x^2)^\delta d\mu(x))$.

It is now easy to establish \eqref{eq:a1}.

$(b)$: Note that $(1+x^2)^{-\delta}d\mu(x)\in\mathcal M_{i,0}^*$ and the proof is analogous to the proof of (a). 
\end{proof}

The following fundamental result is proved in Section 2.
\begin{thm}\label{thm:alpha} For  $0<\alpha<1$ we have $A(\alpha)\cap {\mathcal M_{i,0}^*}\neq\emptyset, \emptyset\neq B(\alpha)\subseteq {\mathcal M_{i,0}^*}$.
\end{thm}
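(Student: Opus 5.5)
The plan is to reduce everything to the function $a(\mu)$ from \eqref{eq:a}, using Theorem~\ref{thm:Riesz} for the inclusion and the shifting mechanism behind Proposition~\ref{thm:shift} for non-emptiness. The inclusion $B(\alpha)\subseteq\mathcal M_{i,0}^*$ is immediate. If $\mu\in B(\alpha)$, then $(1+x^2)^{-\alpha}d\mu(x)$ is determinate. Since $(1+x^2)^{-1}d\mu(x)\le (1+x^2)^{-\alpha}d\mu(x)$, Lemma~\ref{thm:elem}(i) shows that $(1+x^2)^{-1}d\mu(x)$ is determinate. Theorem~\ref{thm:Riesz} then says $\mu$ is determinate or indeterminate N-extremal, and $\mu\in\mathcal M_i^*$ rules out the first option.

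For non-emptiness, I first show the following: if some $\nu\in\mathcal M_{i,0}^*$ has $0<a(\nu)<1$, then every value in $(0,1)$ is attained by $a$ on $\mathcal M_{i,0}^*$. Indeed, Proposition~\ref{thm:shift}(a) produces measures in $\mathcal M_{i,0}^*$ with $a$-values filling $(a(\nu),1)$, and part (b) fills $(0,a(\nu))$; membership in $\mathcal M_{i,0}^*$ is noted in its proof. Given $\alpha\in(0,1)$, I pick $\nu_1,\nu_2\in\mathcal M_{i,0}^*$ with $a(\nu_1)>\alpha>a(\nu_2)$.
\begin{itemize}
\item By the sup in \eqref{eq:a}, $\nu_1\in A(\alpha')$ for some $\alpha'>\alpha$, so $\nu_1\in A(\alpha)$ by \eqref{eq:incl}. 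Hence $A(\alpha)\cap\mathcal M_{i,0}^*\ne\emptyset$.
\item By the inf, $\nu_2\in B(\beta)$ for some $\beta<\alpha$, so $\nu_2\in B(\alpha)$. Hence $B(\alpha)\ne\emptyset$.
\end{itemize}

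The remaining point, which is the main step, is to produce one $\mu\in\mathcal M_{i,0}^*$ with $0<a(\mu)<1$. I argue by contradiction: assume $a(\mu)\in\{0,1\}$ for every $\mu\in\mathcal M_{i,0}^*$. Start from any $\mu\in\mathcal M_{i,0}^*$, which exists because indeterminate moment problems exist (e.g.\ log-normal) and each has N-extremal solutions $\mu_t$.

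\emph{Case $a(\mu)=0$.} Fix $\delta\in(0,1)$ and set $\nu=(1+x^2)^{\delta}d\mu(x)$.
\begin{itemize}
\item $\nu$ is indeterminate because $\nu\ge\mu$.
\item For small $\varepsilon>0$, $(1+x^2)^{-\varepsilon}d\mu$ is determinate. Riesz then gives $\C[x]$ dense in $L^2((1+x^2)^{1-\varepsilon}d\mu)$, hence in $L^2(\nu)$ when $\delta\le 1-\varepsilon$. So $\nu\in\mathcal M_{i,0}^*$.
\item For small $\varepsilon>0$, $(1+x^2)^{-(\delta+\varepsilon)}d\nu=(1+x^2)^{-\varepsilon}d\mu$ is determinate, while $(1+x^2)^{-(\delta-\varepsilon)}d\nu=(1+x^2)^{\varepsilon}d\mu\ge\mu$ is indeterminate.
\end{itemize}
Therefore $a(\nu)=\delta\in(0,1)$, contradicting the assumption.

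\emph{Case $a(\mu)=1$.} Here $\mu\in A(\alpha)$ for all $\alpha<1$. Fix $\delta\in(0,1)$ and set $\nu=(1+x^2)^{-\delta}d\mu(x)$.
\begin{itemize}
\item $\nu$ is indeterminate since $\mu\in A(\delta)$.
\item $(1+x^2)^{-1}d\nu\le(1+x^2)^{-1}d\mu$, which is determinate by Riesz. By Lemma~\ref{thm:elem}, $(1+x^2)^{-1}d\nu$ is determinate, so $\nu\in\mathcal M_{i,0}^*$.
\item $(1+x^2)^{-\gamma}d\nu=(1+x^2)^{-(\gamma+\delta)}d\mu$ is indeterminate for $\gamma<1-\delta$. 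For $\gamma>1-\delta$ it is dominated by $(1+x^2)^{-1}d\mu$, hence determinate.
\end{itemize}
Therefore $a(\nu)=1-\delta\in(0,1)$, again a contradiction.

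In either case we obtain the desired $\mu$, and the first two paragraphs finish the proof. I expect the only delicate bookkeeping to be checking that the shifted measures remain in $\mathcal M_{i,0}^*$ and that the $a$-value lands exactly at $\delta$ or $1-\delta$; the arguments above already cover this.
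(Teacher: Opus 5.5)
Your proof is correct, but it takes a different route from the paper. The inclusion $B(\alpha)\subseteq\mathcal M_{i,0}^*$ is argued exactly as in the paper.

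For the two non-emptiness claims, the paper uses neither the function $a$ nor Proposition~\ref{thm:shift}. It argues by contradiction and iterates:
\begin{itemize}
\item If $A(\alpha)\cap\mathcal M_{i,0}^*=\emptyset$, Riesz's theorem makes $(1+x^2)^{1-\alpha}d\mu$ again indeterminate and N-extremal. After $k$ steps with $k(1-\alpha)\ge 1$, the determinate measure $(1+x^2)^{k(1-\alpha)-1}d\mu$ would majorize $\mu$, which is impossible.
\item If $B(\alpha)=\emptyset$, iterating $(1+x^2)^{-\alpha}$ eventually forces $(1+x^2)^{-1}d\mu$ to be indeterminate for an N-extremal $\mu$, contradicting Riesz.
\end{itemize}

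You instead show that $a$ attains every value in $(0,1)$ on $\mathcal M_{i,0}^*$. You then read off membership from the sup/inf characterization \eqref{eq:a} combined with the monotonicity \eqref{eq:incl}.

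What each approach buys:
\begin{itemize}
\item The paper's argument needs only Theorem~\ref{thm:Riesz} and Lemma~\ref{thm:elem}.
\item Yours yields a finer statement: the range of $a$ on $\mathcal M_{i,0}^*$ contains $(0,1)$. This already holds within the family $(1+x^2)^{c}d\mu(x)$ generated by any single $\mu\in\mathcal M_{i,0}^*$, and all these measures share the support of $\mu$.
\end{itemize}

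Your ``main step'' and the contradiction framing are superfluous. For any $\mu\in\mathcal M_{i,0}^*$, Proposition~\ref{thm:shift}(a) covers $(a(\mu),1)$ when $a(\mu)<1$, and part (b) covers $(0,a(\mu))$ when $a(\mu)>0$. Together with the value $a(\mu)$ itself, this always contains $(0,1)$. Your two boundary cases are simply (a) applied with $a(\mu)=0$ and (b) applied with $a(\mu)=1$.
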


Using the quantity $a(\mu)$ we define the sets
\begin{eqnarray}\label{eq:A,B}
A&=&\{\mu\in\mathcal M_{i,0}^*\mid (1+x^2)^{-a(\mu)}d\mu(x)\in\mathcal M_i^*\}\\
B&=&\{\mu\in\mathcal M_{i,0}^*\mid (1+x^2)^{-a(\mu)}d\mu(x)\in\mathcal M_d^*\},
\end{eqnarray}
and clearly
$$
A\cup B=\mathcal M_{i,0}^*,\quad A\cap B=\emptyset.
$$

The following result is proved in Section 2.

\begin{prop} \label{thm:nonempty}
(i) $A\neq \emptyset$ if and only if there exists $\nu\in\mathcal M_{i,0}^*$ such that $(1+x^2)^{-\delta}d\nu(x)\in\mathcal M_d^*$ for any $\delta>0$.

(ii) $B\neq\emptyset$ if and only if  there exists $\nu\in\mathcal M_d^*$ such that $(1+x^2)^{\delta}d\nu(x)\in\mathcal M_i^*$ for any $\delta>0$.
\end{prop}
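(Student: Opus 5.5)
The plan is to move between $\mu$ and $\nu$ by multiplying with the weight $(1+x^2)^{\pm a}$, using Lemma~\ref{thm:elem}, Theorem~\ref{thm:Riesz} and the definition \eqref{eq:a} of $a(\mu)$. Throughout, multiplying a measure in $\mathcal M^*$ by a weight that is bounded above and below by positive multiples of a power of $(1+x^2)$ keeps all moments finite, keeps infinite support, and preserves both determinacy and indeterminacy. The monotonicity in \eqref{eq:incl} will be used repeatedly.

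For (i), first suppose $\mu\in A$ and put $\nu:=(1+x^2)^{-a(\mu)}d\mu(x)$.
\begin{itemize}
\item $\nu$ is indeterminate by the definition of $A$.
\item For every $\delta>0$ with $a(\mu)+\delta\le 1$, the definition of $a(\mu)$ gives $\mu\in B(a(\mu)+\delta)$. Hence $(1+x^2)^{-\delta}d\nu(x)=(1+x^2)^{-a(\mu)-\delta}d\mu(x)$ is determinate.
\item For larger $\delta$ the measure $(1+x^2)^{-\delta}d\nu$ is dominated by a determinate one, so it is determinate by Lemma~\ref{thm:elem}.
\item $\nu$ is N-extremal. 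Taking $\delta=1$ shows $(1+x^2)^{-1}d\nu$ is determinate, so Theorem~\ref{thm:Riesz} applies. (If $a(\mu)=1$, then $\mu\in A(1)$, which contradicts $\mu\in\mathcal M_{i,0}^*$, so in fact $a(\mu)<1$.)
\end{itemize}
So $\nu\in\mathcal M_{i,0}^*$ has the stated property.

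Conversely, given such a $\nu$, the hypothesis says $\nu\in B(\delta)$ for all $\delta>0$, while $\nu\in A(0)$ trivially. Hence $a(\nu)=0$, and $(1+x^2)^{0}d\nu=\nu$ is indeterminate, so $\nu\in A$.

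For (ii), first suppose $\mu\in B$ and put $\nu:=(1+x^2)^{-a(\mu)}d\mu$, which is determinate.
\begin{itemize}
\item $a(\mu)>0$, because $a(\mu)=0$ would make $\nu=\mu$ indeterminate.
\item For $0<\delta<a(\mu)$, the definition of $a(\mu)$ gives $\mu\in A(a(\mu)-\delta)$, i.e.\ $(1+x^2)^{\delta}d\nu(x)=(1+x^2)^{-(a(\mu)-\delta)}d\mu$ is indeterminate.
\item For $\delta\ge a(\mu)$, the measure $(1+x^2)^{\delta}d\nu$ majorizes one of these, so it is indeterminate by Lemma~\ref{thm:elem}.
\end{itemize}

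Conversely, let $\nu\in\mathcal M_d^*$ be such that $(1+x^2)^{\delta}d\nu$ is indeterminate for all $\delta>0$. Put $\mu:=(1+x^2)^{1/2}d\nu$, which is indeterminate.
\begin{itemize}
\item $\mu$ is N-extremal: $(1+x^2)^{-1}d\mu=(1+x^2)^{-1/2}d\nu\le d\nu$ is determinate by Lemma~\ref{thm:elem}, so Theorem~\ref{thm:Riesz} applies.
\item $(1+x^2)^{-\beta}d\mu$ is indeterminate for $\beta<1/2$, since it equals $(1+x^2)^{1/2-\beta}d\nu$ with a positive exponent.
\item $(1+x^2)^{-1/2}d\mu=\nu$ is determinate.
\end{itemize}
Hence $a(\mu)=1/2$, and $(1+x^2)^{-a(\mu)}d\mu=\nu$ is determinate, so $\mu\in B$.

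The main point needing care is the endpoint behaviour in \eqref{eq:a}: whether $\mu$ itself lies in $A(a(\mu))$ or in $B(a(\mu))$ is exactly what distinguishes $A$ from $B$. The argument therefore has to use only strict inequalities $\alpha<a(\mu)<\beta$, together with the edge cases $a(\mu)\in\{0,1\}$, which are handled above via Theorem~\ref{thm:Riesz}.
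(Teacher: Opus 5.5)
Your proof is correct and follows the paper's argument essentially step for step, filling in details the paper calls ``clear''. The only noteworthy deviation is in the converse of (ii): you use $(1+x^2)^{1/2}d\nu$ with $a=1/2$ where the paper uses $\tau=(1+x^2)d\nu$ with $a(\tau)=1$, and both choices work equally well. Separately, your opening claim that multiplying by a weight comparable to a power of $(1+x^2)$ ``preserves determinacy and indeterminacy'' is false as written (an N-extremal $\mu$ and $(1+x^2)^{-1}d\mu$ already differ), but you never use it---you only use finiteness of moments and Lemma~\ref{thm:elem}.
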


\medskip
{\bf Open Problem:} Are both sets $A,B$ nonempty?

\section{Proofs}

Proof of Theorem~\ref{thm:alpha}:

 If we assume that $A(\alpha)\cap {\mathcal M_{i,0}^*}=\emptyset$ for some $0<\alpha<1$, then for all $\mu\in\mathcal M_{i,0}^*$ we have $(1+x^2)^{-\alpha}d\mu(x)\in\mathcal M _d^*$. By Theorem~\ref{thm:Riesz} we get that the polynomials are dense in $L^2((1+x^2)^{1-\alpha}d\mu(x))$ and $(1+x^2)^{1-\alpha}d\mu(x)$ is also indeterminate since it is greater than $\mu$, hence N-extremal. Repeating this procedure $k$ times we get that the measure $(1+x^2)^{k(1-\alpha)}d\mu(x)$
is indeterminate and N-extremal so
$\nu_k:=(1+x^2)^{k(1-\alpha)-1}d\mu(x)$ is determinate  by Theorem~\ref{thm:Riesz}.
For $k$ so large that $k(1-\alpha)\ge 1$ this is impossible since $\nu_k$ majorizes $\mu$.

\medskip
If $B(\alpha)=\emptyset$ for some $0<\alpha<1$, then for all $\mu\in\mathcal M_i^*$
we have $(1+x^2)^{-\alpha}d\mu(x)\in\mathcal M_i^*$. 
Repeating this $k$ times we get that $(1+x^2)^{-k\alpha}d\mu(x)$
is indeterminate. When $k\alpha\ge 1$ we get
$$
(1+x^2)^{-k\alpha}d\mu(x)\le (1+x^2)^{-1}d\mu(x),
$$ 
so also the measure $(1+x^2)^{-1}d\mu(x)$ is indeterminate. This however contradicts Theorem~\ref{thm:Riesz}, if we start with an indeterminate N-extremal measure $\mu$.

A measure $\mu\in B(\alpha)$ is necessarily N-extremal because $(1+x^2)^{-1}d\mu(x)$ is majorized by the determinate measure $(1+x^2)^{-\alpha}d\mu(x)$ and hence determinate. $\square$

\medskip
Proof of Proposition~\ref{thm:nonempty}:

(i) If $\mu\in A$ then $\nu:=(1+x^2)^{-a(\mu)}d\mu(x)$ is indeterminate by definition, and it is N-extremal as it is majorized by $\mu$. It has clearly the property that $(1+x^2)^{-\delta}d\nu(x)\in \mathcal M_d^*$ for any $\delta>0$.

If $\nu$ has  the stated property, we have in particular   $(1+x^2)^{-1}d\nu(x)\in\mathcal M_d^*$, so the polynomials are dense in $L^2(\nu)$ by Theorem~\ref{thm:Riesz} and furthermore $a(\nu)=0$. This shows that $A\neq \emptyset$ since $\nu\in A$. 

\medskip
(ii) If $\mu\in B$ then $a(\mu)>0$ and $\nu:=(1+x^2)^{-a(\mu)}d\mu(x)\in\mathcal M_d^*$ has the property that for any $0<\delta< a(\mu)$
$$
(1+x^2)^{\delta}d\nu(x)=(1+x^2)^{-(a(\mu)-\delta)}d\mu(x)\in \mathcal M_i^*, 
$$
and $(1+x^2)^\delta d\nu(x)$ is also indeterminate when $\delta\ge  a(\mu)$, since it is larger than $\mu$.

On the other hand, if $\nu$  has the stated property, then in particular $\tau:=(1+x^2)d\nu(x)\in \mathcal M_i^*$  and it is N-extremal by Theorem~\ref{thm:Riesz}. Furthermore $a(\tau)=1$, so we have $\tau\in B$ because
$(1+x^2)^{-a(\tau)}d\tau(x)=\nu$. $\square$

Using Stieltjes problems we shall find measures in $A(\frac12)$ and $B(\frac12)$. Unfortunately we have not been able to find explicit measures in $A(\alpha)$ and $B(\alpha)$ when $\alpha\in(0,1)\setminus\{\frac12\}$.

\section{The Stieltjes case}

Let $s$  be a normalized indeterminate moment sequence. Introducing 
\begin{equation}\label{eq:is}
\xi(\mu):=\inf\supp(\mu), \quad \mu\in V,
\end{equation}
then $\xi(\mu)\in[-\infty,\infty)$. We say that $s$ is a Stieltjes moment sequence 
if there exists a measure $\mu\in V$ with $\supp(\mu)\subseteq [0,\infty[$, so $s$ is Stieltjes iff $\xi(\mu)\ge 0$ for at least one $\mu\in V$.

In the following we assume $s$ to be a Stieltjes moment sequence.

The indeterminate moment sequence $s$ can be determinate as a Stieltjes moment sequence (det(S) in short), if there is only one measure $\mu\in V$ supported by $[0,\infty)$, and indeterminate as a Stieltjes moment sequence (indet(S) in short), if there are more than one measure $\mu\in V$ supported by $[0,\infty)$.

In the det(S) case the N-extremal measure $\mu_{\infty}$ is the unique solution supported by $[0,\infty)$. 

As explained in \cite{B:V}  and repeated in \cite{B} (and going back to Chihara with another terminology and proof), there is a quantity $\alpha(s)\le 0$ defined by
\begin{equation}\label{eq:al}
\alpha(s):=\lim_{n\to\infty} \frac{p_n(0)}{q_n(0)}=\lim_{x\to -\infty} \frac{D(x)}{B(x)}.
\end{equation}
The problem is det(S) iff $\alpha(s)=0$ and indet(S) iff
$\alpha(s)<0$.

Since the Nevanlinna parametrization used here is related to the Nevanlinna parametrization used in \cite{Ak} by replacing $t$ by $-1/t$ we define
\begin{equation}\label{eqFr}
F(s):=\left\{\begin{array}{cl}-1/\alpha(s),& \alpha(s)<0,\\
\infty,& \alpha(s)=0.
\end{array}
\right.
\end{equation}
We then get that the indeterminate N-extremal solutions $\mu_t$ given in \eqref{eq:Nev} are supported on $[0,\infty[$  iff
$ F(s)\le t\le\infty$. For a proof see \cite{B} and references therein.

In the indet(S) case the solution $\mu_\infty$ is called the Krein solution and we also denote it $\mu_{K}$. It is concentrated in the zeros $0=d_1<d_2<\cdots$ of $D$. The measure $\mu_F=\mu_{F(s)}$ is called the Friedrichs solution, because as proved in \cite{Pe1}, it corresponds to the Friedrichs extension of the Jacobi matrix. See details in \cite{B} with further references. The following is proved there:

\begin{prop}\label{thm:revisit}
$(i)$: $\mu_F$ is concentrated in the sequence $\xi_1<\xi_2<\cdots$ defined by
 \begin{equation}\label{eq:limzero}
\xi_k=\lim_{n\to\infty} x_{n,k},\quad k=1,2,\ldots,
\end{equation}
 where $x_{n,k}, k=1,2,\ldots,n$ denote
the $n$ zeros in increasing order of the polynomial $p_n$.    

$(ii)$: $\xi_1=\xi(\mu_{F(s)})>0$ and for all solutions $\mu\in V$ to the indeterminate Hamburger problem we have $\xi(\mu)<\xi_1$ unless $\mu=\mu_F$.

$(iii)$: When $t$ increases from  $F(s)$ to $\infty$, then 
$\xi(\mu_t)$ decreases from  $\xi_1$ to $0=\xi(\mu_K)$. 

$(iv)$: For $t<F(s)$ then $\mu_t$ has one negative mass-point equal to $\xi(\mu_t)$, and when $t$ increases from $-\infty$ to $F(s)$ then $\xi(\mu_t)$ decreases from $0$ to $-\infty$.
\end{prop}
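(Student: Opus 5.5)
The plan is to reduce everything to the real-variable behaviour of the meromorphic function $f(x):=-B(x)/D(x)$. By Proposition~\ref{thm:supportNext}(ii), $x\in\supp(\mu_t)$ if and only if $f(x)=t$.

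\textbf{Step 1 (monotonicity of $f$).} The first step is to establish
$$
B(x)D'(x)-B'(x)D(x)=\pm\sum_{k\ge0}p_k(x)^2\neq 0,\qquad x\in\R .
$$
This should follow by letting $n\to\infty$ in the Christoffel--Darboux identities for $p_n,q_n$, using that $A,B,C,D$ are the locally uniform limits of the corresponding finite combinations. Consequently $f$ is strictly monotone on each interval between consecutive zeros of $D$, and it tends to $\pm\infty$ at those zeros. This recovers the interlacing of the supports.

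In the indet(S) case the Krein solution $\mu_K=\mu_\infty$ lives on the zeros $0=d_1<d_2<\cdots$ of $D$. Hence $D$ has no zeros in $(-\infty,0)$, and $f$ is a monotone bijection from $(-\infty,0)$ onto an interval. By \eqref{eq:al}, $\lim_{x\to-\infty}f(x)=-1/\alpha(s)=F(s)$, while $f\to\mp\infty$ as $x\to 0^-$.

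\textbf{Step 2 (parts (iii) and (iv)).} Step 1 shows that for $t<F(s)$ the equation $f(x)=t$ has exactly one negative solution. By interlacing with the zeros of $D$, there is at most one zero of $B+tD$ below $d_1=0$, so $\mu_t$ has exactly one negative mass point $\xi(\mu_t)$. For $t\ge F(s)$ there is no negative solution. Monotonicity of $f$ on $(-\infty,0)$ gives the claimed monotone behaviour of $\xi(\mu_t)$ in (iv), with the limits $0$ and $-\infty$ read off at the endpoints.

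For $t\in[F(s),\infty)$, the smallest zero of $B+tD$ is the unique solution of $f(x)=t$ in $(0,d_2)$. On $(0,d_2)$ the function $f$ is a monotone bijection onto $\R$. It follows that $\xi(\mu_t)$ moves monotonically from $\xi(\mu_{F(s)})$ down to $0=\xi(\mu_K)$ as $t\to\infty$. The sign bookkeeping, i.e.\ whether $f$ increases or decreases, has to be fixed consistently with the Nevanlinna convention \eqref{eq:Nev}, and I will do this using \eqref{eq:Npar0}.

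\textbf{Step 3 (part (i)).} Next I identify $\supp(\mu_{F(s)})$ with the limits of the zeros of $p_n$. Write $B_n,D_n$ for the polynomial approximants of $B,D$. Then $p_n$ is, up to a factor, $B_n+t_nD_n$ with $t_n$ essentially $q_n(0)/p_n(0)$ (up to the sign convention), which tends to $-1/\alpha(s)=F(s)$ by \eqref{eq:al}. Hurwitz's theorem, applied to the locally uniform convergence $B_n+t_nD_n\to B+F(s)D$, shows that the zeros of $p_n$ converge to the zeros of $B+F(s)D$. The monotone behaviour of $x_{n,k}$ in $n$ (from interlacing of zeros of $p_n$ and $p_{n+1}$) guarantees that the limits $\xi_k$ in \eqref{eq:limzero} exist and that none escape or collapse. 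In particular $\xi_1=\xi(\mu_F)$, and $\xi_1>0$ because $\xi_1>d_1=0$ by interlacing with the support of $\mu_K$.

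\textbf{Step 4 (part (ii), the main obstacle).} For arbitrary $\mu\in V$ I would use the Chebyshev--Markov--Stieltjes inequalities for Gauss quadrature: $\mu((-\infty,x_{n,1}])\ge\lambda_{n,1}>0$. Hence $\xi(\mu)\le x_{n,1}$ for every $n$, and therefore $\xi(\mu)\le\xi_1$. The delicate part is strictness when $\mu\neq\mu_F$. My first attempt is to suppose $\supp(\mu)\subseteq[\xi_1,\infty)$ and consider the Stieltjes transform of $\mu$ on $(-\infty,\xi_1)$ via \eqref{eq:Nev} with a Pick function $\varphi$. The requirement that this transform be analytic across $(-\infty,\xi_1)$, together with the extremal role of $t=F(s)$ from Steps 1--2, should force $\varphi\equiv F(s)$. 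Failing that, I would translate by $\kappa_{-\xi_1}$ (Lemma~\ref{thm:trans}) and show that the translated problem is det(S), using $\alpha(s)$ of the translated sequence, so that $\mu$ must coincide with the unique supported solution, namely the translate of $\mu_F$.
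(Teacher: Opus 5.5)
The paper does not prove this proposition; it quotes it from \cite{B}, so your argument has to stand on its own. Steps 1--3 do, with small additions.

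In the normalization behind \eqref{eq:Nev}--\eqref{eq:Npar0} ($B(0)=-1$, $D(0)=0$, $D'(0)=\sum_k p_k(0)^2$) the identity is $BD'-B'D=-\sum_k p_k^2$. Hence $f=-B/D$ is strictly decreasing between poles, with $f(0^+)=+\infty$, $f(0^-)=-\infty$ and $f(-\infty)=F(s)$. This fixes your sign and gives (iii) and (iv).

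For (i), $p_n=-p_n(0)(B_n+t_nD_n)$ with $t_n=-q_n(0)/p_n(0)\to F(s)$. Hurwitz plus the decrease of $x_{n,k}$ in $n$ then gives (i), provided you state that the $x_{n,k}$ are bounded below by $0$: the zeros of $p_n$ lie in the convex hull of the support of a Stieltjes solution. Without this, ``none escape'' is unjustified.

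Also, $\xi_1>d_1=0$ does not follow from interlacing, which permits $\xi_1<d_1$. It follows from your Step 2, which puts the smallest zero of $B+F(s)D$ in $(0,d_2)$.

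The genuine gap is Step 4, which is the real content of (ii).

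First, you quote the Chebyshev--Markov--Stieltjes inequality backwards. For $k=1$ it gives the \emph{upper} bound $\mu((-\infty,x_{n,1}])\le\lambda_{n,1}$, not a lower bound. The positivity you need, $\mu((-\infty,x_{n,1}))>0$, is nevertheless true. By orthogonality $\int p_n(x)^2(x-x_{n,1})^{-1}\,d\mu(x)=0$, while the integrand is positive on $(x_{n,1},\infty)$ off the zeros of $p_n$.

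Second, and more seriously, strictness is not proved. The Pick-function option is only an expectation. The translation option rests on the claim that the translated problem is det(S), for which you give no reason.

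That claim is the missing idea, and your own Steps 2--3 supply it:
\begin{itemize}
\item Let $\tilde s$ be the moment sequence of $\kappa_{-\xi_1}(\mu_F)$. It is indeterminate, and Stieltjes because this measure lives on $[0,\infty)$.
\item Its orthonormal polynomials are $p_n(x+\xi_1)$, whose smallest zeros $x_{n,1}-\xi_1$ decrease to $0$.
\item If $\tilde s$ were indet(S), Steps 2--3 applied to $\tilde s$ would identify this limit with the smallest zero of $\tilde B+F(\tilde s)\tilde D$, which is strictly positive. This is a contradiction, so $\tilde s$ is det(S).
\item Now take $\mu\in V$ with $\supp(\mu)\subseteq[\xi_1,\infty)$. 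Then $\kappa_{-\xi_1}(\mu)$ and $\kappa_{-\xi_1}(\mu_F)$ are both solutions of $\tilde s$ supported on $[0,\infty)$, so they coincide, i.e.\ $\mu=\mu_F$.
\end{itemize}
This gives $\xi(\mu)<\xi_1$ for every $\mu\neq\mu_F$ directly, so the quadrature step can be dropped altogether.
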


\medskip
In the det(S) case  we may say that the Krein and Friedrichs solutions coincide with the N-extremal solution  $\mu_\infty$ supported in the zeros of $D$.

\begin{rem}{\rm The sequence $(\xi_n)$ is studied in \cite{Ch} without discussion of the Friedrichs solution. 

Notice that $F(s)$ equals the Friedrichs parameter $\gamma_s$ defined in formula (8.35) in \cite{Sch}. 

We can of course also discuss the Friedrichs  and Krein solutions for a non-normalized indeterminate Stieltjes moment sequence.}
\end{rem}

\begin{thm}\label{thm:-1} Let $s$ be a normalized Stieltjes moment sequence which is indet(S). 

(a) For $F(s)<t<\infty$ the measure $x^{-1}d\mu_t(x)$ is indet(S), while $x^{-1}d\mu_{F(s)}(x)$ is determinate. 

(b) For $F(s)<t<\infty$ the measure $x^{-1}d\mu_t(x)$ is the Friedrichs solution for the moment sequence $s^{(-1)}[t]$ given by
\begin{equation}\label{eq:-1mom}
s^{(-1)}[t](n)=\left\{\begin{array}{ll} t, & n=0, \\ s_{n-1}, & n\ge 1, \end{array}\right.
\end{equation}
while $\tau_t:=(t-F(s))\delta_0 + x^{-1}d\mu_{F(s)}(x)$ is the Krein solution for \eqref{eq:-1mom}.

(c) For $F(s)<t'<t<\infty$ the measure $(t-t')\delta_0+x^{-1}d\mu_{t'}(x)$ is a non N-extremal solution to \eqref{eq:-1mom}.
\end{thm}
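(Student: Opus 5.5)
The plan is to work directly with Stieltjes transforms and the Nevanlinna parametrization \eqref{eq:Nev}, using the facts about $\mu_t$ collected in Propositions~\ref{thm:supportNext} and \ref{thm:revisit}.

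\emph{Moments and support.} For $F(s)<t<\infty$, Proposition~\ref{thm:revisit}(iii) gives $\xi(\mu_t)\in(0,\xi_1)$, so $0\notin\supp(\mu_t)$ and $\nu_t:=x^{-1}d\mu_t(x)$ is a positive measure on $(0,\infty)$. Its moments are $\int x^{n-1}d\mu_t=s_{n-1}$ for $n\ge1$, and $\int x^{-1}d\mu_t=t$ for $n=0$ by \eqref{eq:Npar0}. Hence $\nu_t$ solves \eqref{eq:-1mom}. The same computation applies to $\mu_{F(s)}$, whose support starts at $\xi_1>0$, and gives total mass $F(s)$. Therefore $\tau_t=(t-F(s))\delta_0+x^{-1}d\mu_{F(s)}$ also has the moments \eqref{eq:-1mom}. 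Similarly, for $F(s)<t'<t$ the measure $(t-t')\delta_0+x^{-1}d\mu_{t'}$ is a solution. Since $\nu_t\neq\tau_t$ and both live on $[0,\infty)$, the sequence $s^{(-1)}[t]$ is indeterminate and even indet(S). This proves the first half of (a), since $\nu_t$ is itself such a solution.

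\emph{Determinacy of $x^{-1}d\mu_{F(s)}$.} Suppose $\sigma\ne x^{-1}d\mu_{F(s)}$ had the same moments $F(s),s_0,s_1,\dots$. Then $x\,d\sigma(x)$ would be a signed measure with moments $s_n$. The cleaner route is the following. If $x^{-1}d\mu_{F(s)}$ were indeterminate, it would be N-extremal, since it is majorized by $(1+\xi_1^{-2})(1+x^2)^{-1}\cdot(\text{const})\,d\mu$-type bounds; more precisely, $x^{-1}\le C(1+x^2)^{-1/2}$ on $[\xi_1,\infty)$, and $(1+x^2)^{-1}$ times it is dominated by $\mu_F/(1+x^2)$, which is determinate by Riesz. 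By Proposition~\ref{thm:revisit}(ii) applied to this new indet(S) problem, its Friedrichs solution $\rho$ would satisfy $\xi(\rho)\ge\xi(x^{-1}d\mu_F)=\xi_1$. Then $x\,d\rho(x)$ is a positive solution of the original problem whose support starts at or beyond $\xi_1$. By Proposition~\ref{thm:revisit}(ii) this forces $x\,d\rho=\mu_F$, so $\rho=x^{-1}d\mu_F$. The key point is that the Friedrichs solution of $x^{-1}d\mu_F$'s problem would then have a strictly larger alternative in $V$; I would conclude by comparing the zeroth moments. Any solution $\sigma$ on $[0,\infty)$ of the new problem with $\sigma(\{0\})=0$ yields $x\,d\sigma\in V$ supported in $[0,\infty)$, hence equal to some $\mu$ with $\int x^{-1}d\mu=F(s)$. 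Among measures in $V$ on $[0,\infty)$ (the $\mu_t$ with $t\ge F(s)$ and their convex mixtures), $\int x^{-1}d\mu\ge F(s)$, with equality only at $\mu_F$. So only $\mu_F$ arises, and a Hamburger solution with negative mass would contradict Riesz-type density. This step, excluding non-Stieltjes solutions, is the main obstacle. I would handle it via Proposition~\ref{thm:revisit}(iv) applied to the new problem: if the problem is indeterminate, its Friedrichs parameter is the minimal mass $F(s)$ itself, so the problem is det(S) with $\alpha=0$. This leaves it possibly Hamburger-indeterminate, so I would instead directly use $\mu_F\in B(1/2)$-type density, namely that $\C[x]$ is dense in $L^2(\mu_F)$.

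\emph{(b) and (c).} The solutions of \eqref{eq:-1mom} on $[0,\infty)$ correspond, via $\sigma\mapsto x\,d\sigma+\sigma(\{0\})\cdot0$, to $\mu\in V$ on $[0,\infty)$ with $\int x^{-1}d\mu=t-\sigma(\{0\})$. The Krein solution is the N-extremal one with mass at $0$, and it maximizes $\sigma(\{0\})$. The maximum mass at $0$ is $t-F(s)$, attained uniquely by $\mu_F$, which gives $\tau_t$. The Friedrichs solution has the largest $\xi$. Since $\nu_t$ is N-extremal, because $\C[x]$ is dense in $L^2(\mu_t)$ and hence in the smaller space $L^2(\nu_t)$ (Lemma~\ref{thm:elem}), and it has no mass at $0$, while the N-extremal supports interlace with the zeros of the new $D$ containing $0$, it must be the Friedrichs solution; uniqueness of the N-extremal with $\xi>0$ follows from Proposition~\ref{thm:revisit}(iii). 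For (c), the measure has mass $t-t'\in(0,t-F(s))$ at $0$. An N-extremal solution with $0$ in its support is the Krein solution by Proposition~\ref{thm:supportNext}(ii), and that solution has mass $t-F(s)$ at $0$. Hence this measure is not N-extremal.
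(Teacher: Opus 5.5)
Your moment computations, the indet(S) half of (a), and (c) are fine. The two substantive steps, however, contain genuine gaps.

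\emph{Determinacy of $x^{-1}d\mu_{F(s)}(x)$.} You never actually prove this. Your Friedrichs argument only shows that, if the auxiliary problem with moments $F(s),s_0,s_1,\dots$ were indeterminate, its Friedrichs solution would be $x^{-1}d\mu_{F(s)}(x)$ itself, and that is no contradiction.

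Your fallback rests on the claim that $\int x^{-1}d\mu\ge F(s)$ for every $\mu\in V$ on $[0,\infty)$, with equality only for $\mu_{F(s)}$. You justify it by describing the Stieltjes solutions as the $\mu_t$, $t\ge F(s)$, ``and their convex mixtures''. That description is not correct: solutions $\mu_\varphi$ with non-constant $\varphi$ are in general not mixtures of N-extremal ones. Moreover, the inequality is essentially the statement you want. Once $x^{-1}d\mu_{F(s)}$ is known to be determinate, it follows at once by comparing $x^{-1}d\mu+(F(s)-\int x^{-1}d\mu)\,\delta_0$ with $x^{-1}d\mu_{F(s)}$. So it cannot be taken for granted.

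The later remarks (det(S) with $\alpha=0$, density of $\C[x]$ in $L^2(\mu_{F(s)})$) do not close the gap. Density only gives N-extremality of $x^{-1}d\mu_{F(s)}$ in case it is indeterminate, not determinacy.

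The missing idea is to use the \emph{Krein} solution $\sigma_K$ of the auxiliary problem instead of its Friedrichs solution. By Proposition~\ref{thm:supportNext}(ii), its support $0=\eta_1<\xi_1<\eta_2<\xi_2<\cdots$ interlaces with that of $x^{-1}d\mu_{F(s)}$. Multiplying by $x$ removes the atom at $0$, and $x\,d\sigma_K(x)$ has moments $s_n$. It is therefore a solution in $V$ with $\xi(x\,d\sigma_K)=\eta_2>\xi_1$, contradicting Proposition~\ref{thm:revisit}(ii).

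\emph{Friedrichs solution in (b).} You argue that $x^{-1}d\mu_t$ is the Friedrichs solution because it is N-extremal without mass at $0$, with ``uniqueness'' taken from Proposition~\ref{thm:revisit}(iii). This is invalid, since that proposition says the opposite: every $\mu_t$ with $F(s)\le t<\infty$ is N-extremal with $\xi(\mu_t)>0$. For instance, the $\mu_t$ with $F(s)<t<\infty$ are N-extremal solutions of $s$ without mass at $0$ that are not its Friedrichs solution.

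What is needed is that $x^{-1}\bigl(x^{-1}d\mu_t\bigr)=x^{-2}d\mu_t$ is determinate. This holds because on $[\xi(\mu_t),\infty)$ it is bounded by a constant times $(1+x^2)^{-1}d\mu_t$, which is determinate by Theorem~\ref{thm:Riesz}. Part (a), applied to the auxiliary problem \eqref{eq:-1mom}, then excludes every N-extremal Stieltjes solution other than the Friedrichs one.

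Your identification of $\tau_t$ as the Krein solution also leans on the unproved inequality. Once the determinacy of $x^{-1}d\mu_{F(s)}$ is in hand, it follows by subtracting $(t-F(s))\delta_0$ and using the maximal-mass property of the N-extremal solution carrying mass at $0$.
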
 

\begin{proof} $(a)$: For $F(s)<t<\infty$ we have by \eqref{eq:Npar0} 
that the measures $\tau_t=(t-F(s))\delta_0+x^{-1}d\mu_{F(s)}(x)$ and $x^{-1}d\mu_t(x)$ have the same total mass $t$ and the moment sequence \eqref{eq:-1mom}. They are clearly different and in particular $x^{-1}d\mu_t(x)$ is indet(S).

Assume that $x^{-1}d\mu_{F(s)}(x)$ is indeterminate, which will lead to  a contradiction. Let $\sigma_K$ denote the Krein solution of the corresponding moment sequence $s^{(-1)}[F(s)]$ given by \eqref{eq:-1mom} when $t=F(s)$. 

The supports of $x^{-1}d\mu_{F(s)}(x)$ and $\sigma_K$ are  $(\xi_n)_{n\ge 1}$ given in \eqref{eq:limzero} and a sequence $(\eta_n)_{n\ge 1}$ satisfying 
$0=\eta_1<\xi_1<\eta_2<\xi_2<\cdots$ by Proposition~\ref{thm:supportNext} (ii).

The measures $xd\sigma_K(x)$ and $\mu_{F(s)}$ have the same moments. Furthermore,  $\sigma_K-\sigma_K(0)\delta_0$ is determinate by (iii) in Proposition~\ref{thm:supportNext}. By Theorem~\ref{thm:Riesz} 
the polynomials are dense in $L^2$ with respect to $(1+x^2)d[\sigma_K-\sigma_K(0)\delta_0](x)$, and hence also in $L^2$ with respect to
$$
xd[\sigma_K-\sigma_K(0)\delta_0](x)=xd\sigma_K(x).
$$
This shows that $xd\sigma_K(x)$ is an indeterminate N-extremal solution to the moment sequence $s$, and we have $\eta_2=\xi(xd\sigma_K(x))>\xi_1=\xi(\mu_{F(s)})$,
which contradicts that the Friedrichs solution has the greatest lower bound among the solutions, see Proposition~\ref{thm:revisit} (ii).

$(b)$: Since $x^{-1}d\mu_t(x)\le \xi(\mu_t)^{-1}d\mu_t(x)$, we see that the polynomials are dense in $L^2(x^{-1}d\mu_t(x))$, so $x^{-1}d\mu_t(x)$ is an N-extremal solution for
\eqref{eq:-1mom}.

The measure $(1+x^2)^{-1}d\mu_t(x)$ is determinate by Theorem~\ref{thm:Riesz} and so is $x^{-2}d\mu_t(x)$, since $(1+x^2)/x^2$ is bounded above on $\supp(\mu_t)$. By part (a) it follows that $x^{-1}d\mu_t(x)$ is the Friedrichs solution for \eqref{eq:-1mom}.   

We finally need to show that $\tau_t$ is N-extremal. This follows from Lemma 2 p. 113 in \cite{B:C}. For the benefit of the reader we give a self-contained proof. Assume for contradiction that $\tau_t$ is not N-extremal for \eqref{eq:-1mom}. Then there exists an N-extremal solution $\kappa$ with $\kappa(0)>t-F(s)$ by \cite[Theorem 7.15]{Sch}. If we subtract the measure $(t-F(s))\delta_0$ from $\kappa$ and $\tau_t$ we get two measures $\kappa-(t-F(s))\delta_0$
and $x^{-1}d\mu_{F(s)}(x)$ with the same moments and they are equal because the latter is determinate. However, the two measures cannot be equal as the first has mass at 0 and the second not.  

$(c)$: The given measure has the moments \eqref{eq:-1mom}, but since the mass at zero is less than $t-F(s)$ it cannot be N-extremal. 
\end{proof}

\begin{rem}\label{thm:denind}{\rm The authors of \cite{B:T2} introduced a density index for a measure $\sigma$ on $[0,\infty)$ with moments of all orders and for which  the polynomials are dense in $L^2(\sigma)$, namely
\begin{equation}\label{eq:denind}
\delta(\sigma):=\sup\{k\in\N_0 \mid \C[x]\;\mbox{dense in}\, L^2(x^kd\sigma(x))\}.
\end{equation}
It was proved that for the N-extremal solutions $\mu_t, F(s)\le t\le \infty$ supported by $[0,\infty)$ one has
\begin{equation}\label{eq:denind1}
\delta(\mu_{F(s)})=1,\;\delta(\mu_{\infty})=2,\;\delta(\mu_t)=0\;\mbox{for}\; F(s)<t<\infty.
\end{equation}

Theorem~\ref{thm:-1} (a) can be deduced from these results about the density index in the following way:

From $\delta(\mu_t)=0$ we know that $\C[x]$ is not dense in $L^2(xd\mu_t(x))$, but then
$x/(1+x^2)d\mu_t(x)$ is indeterminate as well as $(1/x)d\mu_t(x)$ because $x^2/(1+x^2)$ is bounded above and below on $\supp(\mu_t)\subset [\xi(\mu_t),\infty)$.

From $\delta(\mu_{F(s)})=1$ we get that $x/(1+x^2)d\mu_{F(s)}(x)$ is determinate and so is $(1/x)d\mu_{F(s)}(x)$.
}
\end{rem}

\begin{cor}\label{thm:Es} Let $s$ be a normalized Stieltjes moment sequence which is
indet(S). Then the measures  $xd\mu_t(x), F(s)\le t\le \infty$ are solutions to the shifted moment sequence $(s_{n+1})_{n\ge 0}$.

The measure $xd\mu_{F(s)}(x)$ is an N-extremal solution, $xd\mu_\infty(x)=xd\mu_K(x)$ is the Friedrichs solution. For $F(s)<t<\infty$ the measures $xd\mu_t(x)$ are non N-extremal solutions.
\end{cor}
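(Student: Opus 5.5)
The plan is to use the density index results \eqref{eq:denind1} quoted in Remark~\ref{thm:denind} together with Theorem~\ref{thm:Riesz}, and to identify the Friedrichs solution via Theorem~\ref{thm:-1}(a) applied to the shifted sequence. Write $s'=(s_{n+1})_{n\ge0}$. It is not normalized, but the Remark after Proposition~\ref{thm:revisit} allows this.

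\emph{Step 1 (solutions and indeterminacy).} For $F(s)\le t\le\infty$ the measure $\mu_t$ lives on $[0,\infty)$. Hence $x\,d\mu_t(x)$ is a positive measure with moments $\int x^n\,x\,d\mu_t=s_{n+1}$. Distinct $t$ give distinct measures $x\,d\mu_t$, since the supports of the $\mu_t$ are disjoint (Proposition~\ref{thm:supportNext}(ii)) and at most the point $0$ is removed. Therefore $s'$ is an indeterminate Stieltjes moment sequence, and indeed indet(S), because all these solutions lie on $[0,\infty)$.

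\emph{Step 2 (N-extremality).} By Theorem~\ref{thm:Riesz}, an indeterminate solution $\sigma$ of $s'$ is N-extremal iff $\C[x]$ is dense in $L^2(\sigma)$. For $\sigma=x\,d\mu_t$ this density is exactly $\delta(\mu_t)\ge1$ in \eqref{eq:denind}. By \eqref{eq:denind1}:
\begin{itemize}
\item $\delta(\mu_{F(s)})=1$, so $x\,d\mu_{F(s)}$ is N-extremal;
\item $\delta(\mu_\infty)=2\ge1$, so $x\,d\mu_K$ is N-extremal;
\item $\delta(\mu_t)=0$ for $F(s)<t<\infty$, so those $x\,d\mu_t$ are not N-extremal.
\end{itemize}

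\emph{Step 3 (identifying the Friedrichs solution).} This is the main point.
\begin{itemize}
\item The N-extremal solution $x\,d\mu_K$ is supported in $\{d_2,d_3,\dots\}$, since the atom at $d_1=0$ is killed by the factor $x$. So $0$ is not in its support, while the Krein solution of $s'$ has $0$ in its support. Hence $x\,d\mu_K=\mu'_t$ for some $t$ with $F(s')\le t<\infty$ in the parametrization of $s'$.
\item We have $x^{-1}\cdot x\,d\mu_K(x)=\mu_K-\mu_K(\{0\})\delta_0$. This is determinate by Proposition~\ref{thm:supportNext}(iii).
\item Theorem~\ref{thm:-1}(a), applied to $s'$, says $x^{-1}d\mu'_t$ is indeterminate for $F(s')<t<\infty$. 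So determinacy forces $t=F(s')$, i.e.\ $x\,d\mu_K$ is the Friedrichs solution of $s'$.
\end{itemize}

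\emph{Expected obstacle.} The delicate point is making sure Theorem~\ref{thm:-1} legitimately applies to the non-normalized $s'$. This needs that $s'$ is indet(S), which is exactly Step 1. It also needs the Krein/Friedrichs dichotomy to distinguish solutions via the point $0$ and via determinacy of $x^{-1}d\mu$. If the normalization caused trouble, I would rescale $s'$ by $1/s_1$, which changes neither supports nor (in)determinacy.
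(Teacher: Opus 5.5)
Your proof is correct. Step~2 is the paper's argument, but Step~3, where you identify the Friedrichs solution, takes a different route.

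The paper reads off $\delta(x\,d\mu_{F(s)})=0$ and $\delta(x\,d\mu_\infty)=1$ from \eqref{eq:denind1}. It then tacitly applies the same classification \eqref{eq:denind1} to the shifted problem: among the N-extremal solutions of $(s_{n+1})$ supported by $[0,\infty)$, only the Friedrichs solution has density index $1$.

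You instead use only Theorem~\ref{thm:-1}(a) for $s'$ together with Proposition~\ref{thm:supportNext}(iii). Since $x\,d\mu_K$ has no mass at $0$, it is not the Krein solution. Since $x^{-1}\cdot x\,d\mu_K=\mu_K-\mu_K(\{0\})\delta_0$ is determinate, the parameter must be $F(s')$. This is the mechanism of the proof of Theorem~\ref{thm:newseq}(b); the corollary is its case $t=\infty$ (Remark~\ref{thm:infty}).

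What each route buys: the paper's version is a one-liner, but it leans on the external density-index classification, including its converse use for $s'$. It also leaves unstated that $s'$ is indet(S), which you verify in Step~1. Your version keeps the Friedrichs identification internal to the paper.

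You could remove the density index entirely. By Theorem~\ref{thm:Riesz}, $\C[x]$ is dense in $L^2(x\,d\mu)$ iff $x(1+x^2)^{-1}d\mu(x)$ is determinate. This measure is:
\begin{itemize}
\item dominated by the determinate $x^{-1}d\mu_{F(s)}(x)$ when $\mu=\mu_{F(s)}$;
\item larger than a multiple of the indeterminate $x^{-1}d\mu_t(x)$ when $F(s)<t<\infty$, because $\xi(\mu_t)>0$;
\item dominated by the determinate $\frac12\bigl(\mu_K-\mu_K(\{0\})\delta_0\bigr)$ when $\mu=\mu_K$.
\end{itemize}
The last item also fills a small point in your Step~2: it justifies passing from $\delta(\mu_K)=2$ to density in $L^2(x\,d\mu_K)$.
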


\begin{proof} From \eqref{eq:denind1} we get
$$
\delta(xd\mu_{F(s)}(x))=0,\;\delta(xd\mu_{\infty}(x))=1,
$$
while the polynomials are not dense in $L^2(xd\mu_t(x))$ when $F(s)<t<\infty$.
\end{proof}

From the Stieltjes moment sequence $s$ of Theorem~\ref{thm:-1} we derive a family of Stieltjes moment sequences and determine their  Friedrichs solutions.

\begin{thm}\label{thm:newseq} Let $s$ be a normalized Stieltjes moment sequence  which is indet(S). For $t\in [F(s),\infty)$ let $c_t=\xi(\mu_t)$ denote the smallest mass point in the support of $\mu_t$.

The sequence $\tilde{s}(t)$ given by
\begin{equation}\label{eq:1mom}
\tilde{s}(t)_n=s_{n+1}-c_t s_n,\quad n\ge 0
\end{equation}
is a Stieltjes moment sequence which is determinate when $t=F(s)$ and  indet(S) when $t>F(s)$.

(a) For $t=F(s)$ the unique solution to the moments \eqref{eq:1mom} is
\begin{equation}\label{eq:1momdet}
(x-\xi_1)d\mu_{F(s)}(x),
\end{equation}
 where $\xi_1=\xi(\mu_{F(s)})$, cf. \eqref{eq:limzero}.

(b) For $F(s)<t<\infty$ and $t'\in [F(s),t]$ the measures
\begin{equation}\label{eq:nu}
\nu_{t'}:=(x-c_t)d\mu_{t'}(x)
\end{equation}
are solutions to the moments \eqref{eq:1mom} and
$\nu_t=(x-c_t)d\mu_t(x)$ is the Friedrichs solution. The measure $\nu_{F(s)}$ is N-extremal while the measures $\nu_{t'}$ are non N-extremal when $F(s)<t'<t$. 
\end{thm}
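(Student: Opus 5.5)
The plan is to reduce everything to Corollary~\ref{thm:Es} by a translation that moves the point $c_t$ to $0$.

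\textbf{Positivity and moments.} By Proposition~\ref{thm:revisit}(iii), $\xi(\mu_{t'})$ decreases in $t'$. Hence for $t'\in[F(s),t]$ we have $\supp(\mu_{t'})\subseteq[c_t,\infty)$ with $c_t\ge 0$. So $\nu_{t'}=(x-c_t)\,d\mu_{t'}(x)$ is a positive measure on $[0,\infty)$. Directly, $\int x^n(x-c_t)\,d\mu_{t'}=s_{n+1}-c_ts_n$, so $\tilde s(t)$ is a Stieltjes moment sequence and the $\nu_{t'}$ are solutions; in case (a) this gives \eqref{eq:1momdet}.

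\textbf{Translation.} Put $\mu'_{t'}:=\kappa_{-c_t}(\mu_{t'})$. These measures solve the translated Hamburger problem $s'$, which is indeterminate by Lemma~\ref{thm:trans}. They are exactly its N-extremal solutions, since translation preserves density of $\C[x]$ in $L^2$ and hence N-extremality by Theorem~\ref{thm:Riesz}. Translation also preserves the order of $\xi(\cdot)$. Therefore:
\begin{itemize}
\item $\mu'_{F(s)}$ is the Friedrichs solution of $s'$, since by Proposition~\ref{thm:revisit}(ii) it has the largest infimum of support.
\item $\mu'_t$ has $0=\xi(\mu'_t)$ in its support. By Proposition~\ref{thm:supportNext}(ii) it is the unique N-extremal solution charging $0$, i.e.\ the Krein solution of $s'$.
\item The N-extremal solutions of $s'$ supported by $[0,\infty)$ are precisely $\mu'_{t'}$, $t'\in[F(s),t]$.
\end{itemize}
Consequently $s'$ is indet(S) when $t>F(s)$ and det(S) when $t=F(s)$.

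Since $\nu_{t'}$ is the image of $y\,d\mu'_{t'}(y)$ under $\kappa_{c_t}$, and $y\,d\mu'_{t'}(y)$ solves the shifted sequence $(s'_{n+1})$, the statement transfers back along $\kappa_{c_t}$. This uses Lemma~\ref{thm:trans} once more, the fact that translation commutes with forming solution sets and preserves N-extremality (again via Theorem~\ref{thm:Riesz}), and the fact that translation preserves the ordering by $\xi$ that characterizes the Friedrichs solution.

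\textbf{Case $t>F(s)$.} Apply Corollary~\ref{thm:Es} to $s'$. It gives that $y\,d\mu'_t$ is the Friedrichs solution, $y\,d\mu'_{F(s)}$ is N-extremal, and $y\,d\mu'_{t'}$ for $F(s)<t'<t$ is not N-extremal. Translating back yields (b), and it also shows $\tilde s(t)$ is indet(S), since $\nu_t\ne\nu_{F(s)}$ are both supported by $[0,\infty)$.

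\textbf{Case $t=F(s)$: the main obstacle.} Here Corollary~\ref{thm:Es} is not available because $s'$ is only det(S). We must show that $y\,d\mu'(y)$ is determinate, where $\mu'=\mu'_{F(s)}$ is simultaneously the Krein and Friedrichs solution of $s'$. I would argue by contradiction, in the style of Theorem~\ref{thm:-1}(a). Suppose $\sigma:=y\,d\mu'(y)$ is indeterminate.
\begin{itemize}
\item Take the N-extremal solution $\rho$ of $(s'_{n+1})$ whose support contains $0$. Its remaining support is interlaced with $\supp\sigma\subset(0,\infty)$ by Proposition~\ref{thm:supportNext}(ii), so $\rho$ is supported in $[0,\infty)$.
\item Then $m\delta_0+y^{-1}d[\rho-\rho(0)\delta_0](y)$, with $m$ chosen to match $s'_0$, is a positive measure on $[0,\infty)$ solving $s'$.
\item This solution has a different support from $\mu'$: it is the interlaced sequence, with the atom at $0$ possibly absent. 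That contradicts det(S).
\end{itemize}
The delicate points are that $m\ge0$, and that this solution really differs from $\mu'$. For $m\ge 0$, I would check $\int y^{-1}d\rho\le\int y^{-1}d\sigma$ using \eqref{eq:Npar0} for the shifted problem. If this route fails, the fallback is the density-index result $\delta(\mu_\infty)\ge1$ of \cite{B:T2} in the det(S) case, interpreted as in Remark~\ref{thm:denind}.
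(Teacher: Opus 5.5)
Your treatment of $t>F(s)$ is sound and differs from the paper mainly in packaging. Translating $c_t$ to $0$ makes $\mu'_t$ the Krein solution and $\mu'_{F(s)}$ the Friedrichs solution of $s'$, so Corollary~\ref{thm:Es} applied to $s'$ yields all of (b). The paper instead gets the Friedrichs property of $\nu_t$ from Theorem~\ref{thm:-1}(a), via $x^{-1}d\nu_t\le\mu_t-\mu_t(c_t)\delta_{c_t}$.

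The gap is the case $t=F(s)$, i.e.\ the determinacy claim and the uniqueness in part (a). Your measure $\tau=m\delta_0+y^{-1}d[\rho-\rho(0)\delta_0](y)$ does not solve $s'$. Its first moment is $\int d[\rho-\rho(0)\delta_0]=s'_1-\rho(0)$, and $\rho(0)>0$ precisely because you chose $\rho$ to charge $0$. Dividing by $y$ inverts multiplication by $y$ only for solutions of $(s'_{n+1})$ without an atom at $0$. For the same reason $\int y^{-1}d\rho=\infty$, so your proposed check of $m\ge 0$ is meaningless.

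The fallback does not close the gap either. Density of $\C[x]$ in $L^2(y\,d\mu')$ gives, via Theorem~\ref{thm:Riesz} as in Remark~\ref{thm:denind}, only that $y(1+y^2)^{-1}d\mu'(y)$ is determinate. That means $y\,d\mu'$ is determinate or N-extremal, which you know anyway: since $\mu'-\mu'(0)\delta_0$ is determinate, even $\delta(\mu')\ge 2$ is automatic. What you actually need is equivalent to $\delta(\mu')\ge 3$, i.e.\ to the claim itself.

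The missing observation is this. Corollary~\ref{thm:Es} cannot be applied to the det(S) sequence $s'$, but it can be applied to $s$ itself, which is indet(S). It gives that $x\,d\mu_{F(s)}(x)$ is indeterminate and N-extremal, with smallest atom at $\xi_1$. By Proposition~\ref{thm:supportNext}(iii), the measure $x\,d\mu_{F(s)}(x)-\xi_1\mu_{F(s)}(\xi_1)\delta_{\xi_1}$ is therefore determinate. The measure $(x-\xi_1)\,d\mu_{F(s)}(x)$ is dominated by it: both vanish at $\xi_1$, and $x-\xi_1\le x$ elsewhere. Hence it is determinate by Lemma~\ref{thm:elem}. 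This is the paper's three-line argument.

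Your contradiction scheme can also be repaired. Replace $\rho$ by an N-extremal solution of $(s'_{n+1})$ supported in $(0,\infty)$ with $\int y^{-1}d\sigma<\int y^{-1}d\rho\le s'_0$. Such a solution is available by \eqref{eq:Npar0} for the shifted problem, which would then be indet(S), because $\int y^{-1}d\sigma=s'_0-\mu'(0)<s'_0$. Then $(s'_0-\int y^{-1}d\rho)\delta_0+y^{-1}d\rho$ is a second solution of $s'$ on $[0,\infty)$, contradicting det(S). This route is longer than the direct domination argument.
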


\begin{proof} Assume first $t=F(s)$. The measure \eqref{eq:1momdet} is positive and has the moments \eqref{eq:1mom}.

By Corollary~\ref{thm:Es} the measure $xd\mu_{F(s)}(x)$ is indeterminate and N-extremal. By Proposition~\ref{thm:supportNext} (iii) the measure
$$
xd\mu_{F(s)}(x)-\xi_1\mu_{F(s)}(\xi_1)\delta_{\xi_1}
$$ 
is determinate. We have
$(x-\xi_1)d\mu_{F(s)}(x)\le xd\mu_{F(s)}(x)$. As the measure on the left vanishes at $\xi_1$
we get
$$
(x-\xi_1)d\mu_{F(s)}(x)\le xd\mu_{F(s)}(x)-\xi_1\mu_{F(s)}(\xi_1)\delta_{\xi_1},
$$
and therefore $(x-\xi_1)d\mu_{F(s)}(x)$ is determinate.

\medskip
Assume next $F(s)<t$ and $F(s)\le t'\le t$. 

Since $c_{t'}\ge c_t$  the measures 
$\nu_{t'}$ are positive with the moment sequence \eqref{eq:1mom}, showing that it is a Stieltjes moment sequence which is indet(S).

The measure $\mu_t-\mu_t(c_t)\delta_{c_t}$ is determinate by Proposition~\ref{thm:supportNext} (iii), and since 
 $$
x^{-1}d\nu_t(x)=x^{-1}(x-c_t)d\mu_t(x)=x^{-1}(x-c_t)(d\mu_t(x)-\mu_t(c_t)\delta_{c_t})
\le \mu_t-\mu_t(c_t)\delta_{c_t},
$$
 we see that the measure $x^{-1}d\nu_t(x)$ is determinate. By Theorem~\ref{thm:-1} (a) we get that $\nu_t$ is the Friedrichs solution to the moments \eqref{eq:1mom}.

The properties of the measures $\nu_{t'}$  for $F(s)\le t'<t$ are seen as in 
Corollary~\ref{thm:Es}. 
\end{proof}

\begin{rem}\label{thm:infty} {\rm Theorem~\ref{thm:newseq} for $t=\infty$ reduces to 
Corollary~\ref{thm:Es} because $c_{\infty}=0$.
}
\end{rem}

We now replace $x^{-1}$ by $(1+x^2)^{-1/2}$ in front of the measures $\mu_t$.  Note that $(1+x^2)^{-1/2}d\mu_t(x)\in \mathcal M^*$ also for $-\infty < t<F(s)$, where $\mu_t$ has a negative mass-point.

\begin{thm}\label{thm:(1+x^2)^{-1/2}} Let $s$ be a normalized indeterminate moment sequence which is Stieltjes.

(a) If the problem is indet(S), then the measure $(1+x^2)^{-1/2}d\mu_{F(s)}(x)$ is determinate while
 the measures $(1+x^2)^{-1/2}d\mu_t(x)$  are indet(S) and N-extremal for $F(s)<t <\infty$.

 The measure $(1+x^2)^{-1/2}d\mu_K(x)$ is indeterminate, N-extremal  and det(S). 

(b) If the problem is det(S) and hence $F(s)=\infty$ with the only Stieltjes solution $\mu=\mu_{F(s)}$, then $(1+x^2)^{-1/2}d\mu(x)$ is determinate.

(c) The measures $(1+x^2)^{-1/2}d\mu_t(x)$ are indeterminate and N-extremal for $-\infty<t<F(s)$.

In particular $\mu_F\in B(\frac12)$, hence $a(\mu_F)\le 1/2$, while $\mu_t\in A(\frac12)$, hence $a(\mu_t)\ge 1/2$ for $t\in\R^*\setminus\{F(s)\}$.
\end{thm}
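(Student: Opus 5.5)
The plan is to reduce everything to the statements about $x^{-1}d\mu_t(x)$ in Theorem~\ref{thm:-1}. The key device is a comparison between the weights $x^{-1}$ and $(1+x^2)^{-1/2}$ on supports that stay away from $0$. Combined with Lemma~\ref{thm:elem}, Lemma~\ref{thm:trans} and Proposition~\ref{thm:supportNext}(iii), this handles the mass points near or at $0$.

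\emph{Part (a).} For $F(s)\le t<\infty$ we have $\supp(\mu_t)\subseteq[c_t,\infty)$ with $c_t=\xi(\mu_t)>0$. On such a set $x(1+x^2)^{-1/2}$ is bounded above and below by positive constants, so $(1+x^2)^{-1/2}d\mu_t$ and $x^{-1}d\mu_t$ majorize constant multiples of each other. Lemma~\ref{thm:elem}(i) then transfers determinacy and indeterminacy between them. Theorem~\ref{thm:-1}(a) therefore gives that $(1+x^2)^{-1/2}d\mu_{F(s)}$ is determinate, and that $(1+x^2)^{-1/2}d\mu_t$ is indeterminate for $F(s)<t<\infty$.

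The indet(S) property for these $t$ is not automatic from indeterminacy. It follows because Theorem~\ref{thm:-1}(b) shows $x^{-1}d\mu_t$ is the Friedrichs solution of an indet(S) problem, so there is another solution on $[0,\infty)$. Here I will try to transport this via the equivalence of weights. Alternatively, I will use that a measure which is indeterminate, supported in $[c,\infty)$ with $c>0$, and whose Friedrichs point $\xi_1\ge c>0$, is indet(S), since the Krein solution has $\xi=0<\xi_1$.

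N-extremality follows from Theorem~\ref{thm:Riesz}. The measure $(1+x^2)^{-1}\cdot(1+x^2)^{-1/2}d\mu_t$ is majorized by $(1+x^2)^{-1}d\mu_t$, which is determinate, so Lemma~\ref{thm:elem}(i) applies.

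For $\mu_K$, which has a mass at $0$, I split off the atom: $\mu_K=\mu_K(0)\delta_0+\mu_K'$, where $\mu_K'$ is determinate by Proposition~\ref{thm:supportNext}(iii). Indeterminacy of $(1+x^2)^{-1/2}d\mu_K$ then needs a separate argument. Here I will use Corollary~\ref{thm:Es}: $xd\mu_K$ is the Friedrichs solution of $(s_{n+1})$, so the density index gives that $\C[x]$ is dense in $L^2(xd\mu_K)$ but not in $L^2(x^2d\mu_K)$. By Theorem~\ref{thm:Riesz}, $x^2(1+x^2)^{-1}d\mu_K$ is therefore indeterminate. Away from $0$ this is comparable to $(1+x^2)^{-1/2}\cdot x\,d\mu_K$, and I will convert it into indeterminacy of $(1+x^2)^{-1/2}d\mu_K$ using $x\le(1+x^2)^{1/2}$, which gives the majorization $x(1+x^2)^{-1}\,d\mu_K\le (1+x^2)^{-1/2}d\mu_K$. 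The precise chain between these weights is the step I expect to need the most care.

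The det(S) claim holds because this measure has mass at $0$ and is N-extremal, so it must be the Krein $=$ Friedrichs solution $\mu_\infty$ of a det(S) problem. To see this, note that its support contains $0$ and the zeros interlace. I expect the transfer of the Stieltjes classification to be the main obstacle.

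\emph{Part (b).} This will follow by the same route as Theorem~\ref{thm:-1}(a). The key fact is that $\mu_\infty$ has $0$ in its support, with $\mu_\infty-\mu_\infty(0)\delta_0$ determinate, together with the maximality of $\xi(\mu_F)$ from Proposition~\ref{thm:revisit}. I will also use $\delta(\mu_\infty)=1$ in the det(S) case, which implies $x(1+x^2)^{-1}d\mu$ is determinate and hence $(1+x^2)^{-1/2}d\mu$ is determinate near infinity. The atom at $0$ is finite and can be handled separately.

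\emph{Part (c).} For $t<F(s)$ there is one negative atom $\xi(\mu_t)<0$, and $\mu_t$ restricted to $[0,\infty)$ differs from $\mu_t$ by that atom. I will translate using Lemma~\ref{thm:trans} by $-\xi(\mu_t)$, so that the support becomes $\subseteq[0,\infty)$ with the atom at $0$. This places us in the Krein situation for a translated Stieltjes problem, and I will repeat the argument used for $\mu_K$ in (a), since $(1+x^2)^{-1/2}$ and $(1+(x+a)^2)^{-1/2}$ are comparable. The final statements about $B(\tfrac12)$, $A(\tfrac12)$ and $a(\mu)$ follow directly from the definitions \eqref{eq:A(alpha)}, \eqref{eq:B(alpha)} and \eqref{eq:a}.
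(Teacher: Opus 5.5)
Your treatment of $F(s)\le t<\infty$ in (a) matches the paper: on $[\xi(\mu_t),\infty)$ the weights $x^{-1}$ and $(1+x^2)^{-1/2}$ are comparable, so Theorem~\ref{thm:-1}(a) transfers. One small fix there: to get indet(S), argue that a det(S) problem has $\mu_\infty$ as its only Stieltjes solution, and $\mu_\infty$ charges $0$ since $D(0)=0$. Appealing to ``the Krein solution'' presupposes indet(S).

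The genuine gap is the Krein measure, and it propagates to (b) and (c). For any bounded $w\ge 0$, the measure $w\,d(\mu_K-\mu_K(0)\delta_0)$ is majorized by a multiple of $\mu_K-\mu_K(0)\delta_0$, which is determinate by Proposition~\ref{thm:supportNext}(iii). Hence $(1+x^2)^{-1/2}d(\mu_K-\mu_K(0)\delta_0)$ is determinate, and the indeterminacy of $(1+x^2)^{-1/2}d\mu_K$ is produced entirely by the atom at $0$. No argument that works ``away from $0$'' can detect it. Your chain fails at each step:
\begin{itemize}
\item $\delta(x\,d\mu_K)=1$ means $\C[x]$ \emph{is} dense in $L^2(x^2d\mu_K)$; density first fails for $x^3d\mu_K$.
\item So $x^2(1+x^2)^{-1}d\mu_K$ is determinate, not indeterminate.
\item Your minorant $x(1+x^2)^{-1}d\mu_K$ of $(1+x^2)^{-1/2}d\mu_K$ is also determinate, so that majorization yields nothing.
\end{itemize}
The same flaw sinks (b). There the whole content is whether re-adding the atom at $0$ to the trivially determinate measure $(1+x^2)^{-1/2}d(\mu-\mu(0)\delta_0)$ destroys determinacy. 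That is exactly what happens for $\mu_K$ in (a), so ``the atom can be handled separately'' is not an argument. Also, the value $\delta(\mu_\infty)=1$ you quote is unavailable; in fact $x^2(1+x^2)^{-1}d\mu\le\mu-\mu(0)\delta_0$ forces $\delta(\mu)\ge 2$. Finally, ``N-extremal with an atom at $0$'' does not give det(S), since the Krein solution of every indet(S) problem has both properties.

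The missing idea is to translate by $\kappa_1$. This moves $0$ off the support and replaces $x^{-1}$ by $(1+x)^{-1}$, a weight comparable to $(1+x^2)^{-1/2}$ on all of $[0,\infty)$, including at $0$.
\begin{itemize}
\item The measures $\kappa_1(\mu_t)$, $F(s)\le t\le\infty$, are N-extremal solutions on $[1,\infty)$ of an indet(S) problem whose Friedrichs solution is $\kappa_1(\mu_F)$.
\item $\kappa_1(\mu_K)$ is neither Friedrichs nor Krein, so Theorem~\ref{thm:-1}(a) and Lemma~\ref{thm:trans} make $(1+x)^{-1}d\mu_K$ indeterminate. Then $(1+x)^{-1}\le(1+x^2)^{-1/2}$ on $[0,\infty)$ gives indeterminacy of $(1+x^2)^{-1/2}d\mu_K$.
\item In (b), det(S) forces $\xi(\nu)<0$ for every other $\nu\in V$, so $\kappa_1(\mu)$ is the Friedrichs solution and $(1+x)^{-1}d\mu$ is determinate. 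Then $(1+x^2)^{-1/2}\le\sqrt2\,(1+x)^{-1}$ on $[0,\infty)$ finishes (b).
\item For det(S) of $(1+x^2)^{-1/2}d\mu_K$: if it were indet(S) with Friedrichs solution $\sigma_F$, then $x\,d\sigma_F$ and $x(1+x^2)^{-1/2}d\mu_K$ would have the same moments and disjoint supports. But the latter is $\le\mu_K-\mu_K(0)\delta_0$, hence determinate, a contradiction.
\end{itemize}
Once the $\mu_K$ case is repaired this way, your translation argument for (c) goes through as in the paper.
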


\begin{proof} $(a)$: Assume first that the problem is indet(S). The function $h(x)=\sqrt{1+x^2}/x$ is decreasing on the interval $(0,\infty)$  and hence for $c>0$
$$
1<h(x)\le h(c),\quad c\le x<\infty.
$$
For $F(s)\le t<\infty$ we have using \eqref{eq:is}
$$
(1+x^2)^{-1/2}d\mu_t(x)\le x^{-1}d\mu_t(x)\le h(\xi(\mu_t))(1+x^2)^{-1/2}d\mu_t(x),
$$
and it follows from Theorem~\ref{thm:-1} that the measures $(1+x^2)^{-1/2}d\mu_t(x)$ are indet(S) and N-extremal for $F(s)<t<\infty$ and determinate for $t=F(s)$.

Let $\kappa_1:\R\to\R$ denote the translation $\kappa_1(x)=x+1$. The image measures
$\kappa_1(\mu_t)$ are supported by $[1,\infty)$, when $F(s)\le t\le\infty$ with moments 
$$
\tilde{s}_n=\int (x+1)^n\,d\mu_t(x)=\sum_{k=0}^n\binom{n}{k}s_k,
$$
 and $\tilde{s}$ is indet(S) with Friedrichs solution $\kappa_1(\mu_{F(s)})$, but $\kappa_1(\mu_K)$ is not the Krein solution, since 0 is not in the support. Therefore
$y^{-1}d\kappa_1(\mu_K)(y)$ is indet(S) and 
\begin{equation}\label{eq:new1}
\kappa_{-1}\left(y^{-1}d\kappa_1(\mu_K)(y)\right)=(1+x)^{-1}d\mu_K(x)
\end{equation} 
is indeterminate. However 
$$
(1+x)^{-1}d\mu_K(x)\le (1+x^2)^{-1/2}d\mu_K(x),
$$
so also $(1+x^2)^{-1/2}d\mu_K(x)$ is indeterminate.

We claim that it is also det(S) and hence N-extremal. Indeed, assume for contradiction that it is indet(S) and let $\sigma_F$ denote the corresponding Friedrichs solution. The measures $xd\sigma_F(x)$ and $x(1+x^2)^{-1/2}d\mu_K(x)$ have the same moments, and they are different since their supports are disjoint. However, we have
$$
x(1+x^2)^{-1/2}d\mu_K(x)=x(1+x^2)^{-1/2}d[\mu_K-\mu_K(0)\delta_0](x)
\le \mu_K-\mu_K(0)\delta_0,
$$
and since the last measure is determinate by (iii) in Proposition~\ref{thm:supportNext},   
so is also $x(1+x^2)^{-1/2}d\mu_K(x)$, which is a contradiction.

$(b)$: Let us next assume that $s$ is det(S) and let $\mu$ be the only Stieltjes solution. In this case $\mu$ is N-extremal concentrated in the zeros of $D$. Since $\xi(\nu)<0$ for all solutions $\nu\in V\setminus \{\mu\}$, the measure $\kappa_1(\mu)$ is indet(S), and it must be the  Friedrichs solution for the moments $\tilde{s}$. By Theorem~\ref{thm:-1} $y^{-1}d\kappa_1(\mu)(y)$ is determinate and so is the translate under $\kappa_{-1}$, which by \eqref{eq:new1} equals 
$(1+x)^{-1}d\mu(x)$. As 
$$
(1+x^2)^{-1/2}\le \sqrt{2}(1+x)^{-1},\quad x\ge 0,
$$
we get that $(1+x^2)^{-1/2}d\mu(x)$ is determinate.

$(c)$: Consider the N-extremal measure $\mu_t$ for $-\infty<t<F(s)$  so $\xi(\mu_t)<0$. Put $a=-\xi(\mu_t)$. Then the translated measure $\kappa_a(\mu_t)$ is supported by $[0,\infty)$ with mass at  0. It is the Krein solution of an indeterminate Stieltjes moment problem, and therefore $(1+x^2)^{-1/2}d\kappa_a(\mu_t)(x)$ is indeterminate, N-extremal and det(S) by part $(a)$. By Lemma~\ref{thm:trans}
$$
\kappa_{-a}\left((1+x^2)^{-1/2}d\kappa_a(\mu_t)(x)\right)=(1+(x+a)^2)^{-1/2}d\mu_t(x)
$$
is indeterminate and N-extremal and so is $(1+x^2)^{-1/2}d\mu_t(x)$.
\end{proof}

\begin{thm}\label{thm:StoF} Let $s$ be a normalized indeterminate moment sequence which is Stieltjes and let $\mu\in V$ be a solution supported on $[0,\infty)$. Assume that the measure $(1+x^2)^{-1/2}d\mu(x)$ is indeterminate. Then it is the Friedrichs solution of the corresponding moment sequence if and only if $\mu$ is N-extremal.
\end{thm}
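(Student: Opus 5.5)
The plan is to put $\sigma:=(1+x^2)^{-1/2}d\mu(x)$, an indeterminate measure on $[0,\infty)$, and prove both directions by comparing $\sigma$ with $\mu$. Two facts from the paper do the work. The first is Theorem~\ref{thm:Riesz}: a measure is N-extremal or determinate iff its $(1+x^2)^{-1}$-multiple is determinate. The second is Theorem~\ref{thm:-1}(a), together with Theorem~\ref{thm:(1+x^2)^{-1/2}}(b) in the det(S) case, which characterize the Friedrichs solution by a determinacy property. Normalization plays no role, since multiplying by a positive constant preserves all the notions involved.

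($\Leftarrow$) Assume $\mu$ is N-extremal, so $\mu=\mu_t$ with $F(s)\le t\le\infty$.
\begin{itemize}
\item Since $(1+x^2)^{-1}d\sigma\le(1+x^2)^{-1}d\mu$ and the right side is determinate, Theorem~\ref{thm:Riesz} shows that $\sigma$ is N-extremal for its moment sequence.
\item If the moment problem of $\sigma$ is det(S), then $\sigma$ is its unique Stieltjes solution, hence its Friedrichs solution by the convention after Proposition~\ref{thm:revisit}. This covers $\mu=\mu_K$, by Theorem~\ref{thm:(1+x^2)^{-1/2}}(a).
\item Otherwise the problem of $\sigma$ is indet(S) and $\mu\ne\mu_K$, so $\xi(\sigma)=\xi(\mu)>0$. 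Then
$$x^{-1}d\sigma(x)=x^{-1}(1+x^2)^{-1/2}d\mu(x)\le C(1+x^2)^{-1}d\mu(x),$$
because $\sqrt{1+x^2}/x$ is bounded on $[\xi(\mu),\infty)$. The right side is determinate, so $x^{-1}d\sigma$ is determinate.
\item Among the N-extremal solutions of $\sigma$'s problem, Theorem~\ref{thm:-1}(a) shows that $x^{-1}d\sigma_{t'}$ is indeterminate for every non-Friedrichs, non-Krein $t'$. The Krein solution has mass at $0$, whereas $\sigma$ does not. Hence $\sigma$ is the Friedrichs solution.
\end{itemize}

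($\Rightarrow$) Assume $\sigma$ is the Friedrichs solution of its problem. The goal is to show that $(1+x^2)^{-1/2}d\sigma=(1+x^2)^{-1}d\mu$ is determinate, after which Theorem~\ref{thm:Riesz} gives that $\mu$ is N-extremal, since $\mu$ is indeterminate.
\begin{itemize}
\item If $\sigma$'s problem is det(S), then $(1+x^2)^{-1/2}d\sigma$ is determinate directly by Theorem~\ref{thm:(1+x^2)^{-1/2}}(b).
\item If it is indet(S), then $\xi(\sigma)=\xi_1>0$ by Proposition~\ref{thm:revisit}(ii), and $x^{-1}d\sigma$ is determinate by Theorem~\ref{thm:-1}(a). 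On $[\xi_1,\infty)$ we have $(1+x^2)^{-1/2}\le x^{-1}$, so $(1+x^2)^{-1/2}d\sigma$ is dominated by a determinate measure and is therefore determinate by Lemma~\ref{thm:elem}.
\end{itemize}

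The step needing most care is the ($\Leftarrow$) direction. There one must correctly invoke the characterization that the Friedrichs solution is the only N-extremal Stieltjes solution with $x^{-1}d\sigma$ determinate. This requires separately excluding the Krein solution, done via mass at $0$ versus $\xi(\sigma)>0$, and handling the det(S) case and the case $\mu=\mu_K$, where $\mu$ has mass at $0$ and $x^{-1}$ cannot be used.
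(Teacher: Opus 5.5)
Your proof is correct. The ($\Rightarrow$) direction is essentially the paper's. The paper simply applies Theorem~\ref{thm:(1+x^2)^{-1/2}} to the Friedrichs solution $\sigma$ of the new problem to get $(1+x^2)^{-1}d\mu=(1+x^2)^{-1/2}d\sigma$ determinate. You reach the same conclusion by unpacking part (a) of that theorem via Theorem~\ref{thm:-1}(a).

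The ($\Leftarrow$) direction takes a genuinely different route.
\begin{itemize}
\item The paper argues by contradiction with $c=\xi(\mu)$. If $\sigma$ were not the Friedrichs solution, the Friedrichs solution $\sigma_F$ would satisfy $\xi(\sigma_F)>c$ by Proposition~\ref{thm:revisit}(ii). Then $(x-c)\,d\sigma$ and $(x-c)\,d\sigma_F$ have equal moments and disjoint supports. But $(x-c)(1+x^2)^{-1/2}d\mu\le \mu-\mu(c)\delta_c$, which is determinate by Proposition~\ref{thm:supportNext}(iii), a contradiction. This works uniformly for every $c\ge 0$, so no case split (Krein or not, det(S) or indet(S)) is needed, and neither Theorem~\ref{thm:-1} nor Theorem~\ref{thm:(1+x^2)^{-1/2}} is used.
\item You instead use the characterization implicit in Theorem~\ref{thm:-1}(a): among N-extremal Stieltjes solutions without mass at $0$, only the Friedrichs solution has $x^{-1}d\sigma$ determinate. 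You get determinacy of $x^{-1}d\sigma$ from $x^{-1}d\sigma\le C(1+x^2)^{-1}d\mu$ and Theorem~\ref{thm:Riesz}.
\item Your route requires $\xi(\mu)>0$, which is why you must dispose of $\mu=\mu_K$ separately via Theorem~\ref{thm:(1+x^2)^{-1/2}}(a). The det(S) part of that theorem is itself proved by exactly the paper's trick with $c=0$.
\end{itemize}
Your version presents the theorem as a corollary of earlier results, while the paper's is shorter and self-contained.

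One small omission: if $s$ itself is det(S), then $\mu=\mu_\infty$ has mass at $0$, and Theorem~\ref{thm:(1+x^2)^{-1/2}}(a) is not available. So ``this covers $\mu=\mu_K$'' is not justified in that case. Add that this case cannot occur under the hypothesis, because there $\sigma$ is determinate by Theorem~\ref{thm:(1+x^2)^{-1/2}}(b).
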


\begin{proof} Assume first that $(1+x^2)^{-1/2}d\mu(x)$ is the Friedrichs solution of the corresponding moment problem assumed to be indeterminate. By Theorem~\ref{thm:(1+x^2)^{-1/2}} the measure $(1+x^2)^{-1}d\mu(x)$ is determinate and therefore $\mu$ is N-extremal by Theorem~\ref{thm:Riesz}.

Assume next that $\mu$ is N-extremal. Then $c:=\xi(\mu)$ is the smallest mass-point of $\mu$. By assumption $(1+x^2)^{-1/2}d\mu(x)$ is indeterminate, and it is also N-extremal since it is majorized by $\mu$. 
Assume for contradiction that $(1+x^2)^{-1/2}d\mu(x)$ is not the Friedrichs  solution. Thus there is an N-extremal  measure $\sigma$ with $\xi(\sigma)>c$, and  the moments
of $(1+x^2)^{-1/2}d\mu(x)$ and $\sigma$ coincide. Therefore the moments of
$(x-c)(1+x^2)^{-1/2}d\mu(x)$ and $(x-c)d\sigma(x)$ coincide. However, 
$$
(x-c)(1+x^2)^{-1/2}d\mu(x)=(x-c)(1+x^2)^{-1/2}d[\mu-\mu(c)\delta_c](x)\le \mu-\mu(c)\delta_c,
$$
and we conclude that $(x-c)(1+x^2)^{-1/2}d\mu(x)$ is determinate. We get a contradiction as the supports of that measure and $(x-c)d\sigma(x)$ are disjoint. 
\end{proof}

If we apply Theorem~\ref{thm:StoF} to $\mu_t, F(s)<t<\infty$ from Theorem~\ref{thm:(1+x^2)^{-1/2}} we get:

\begin{cor}\label{thm:Fr} Let $s$ be a normalized indeterminate Stieltjes moment sequence which is indet(S). For $F(s)<t<\infty$ the measure $(1+x^2)^{-1/2}d\mu_t(x)$ is the Friedrichs solution of the corresponding moment problem. 
\end{cor}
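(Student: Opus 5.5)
This is a direct combination of Theorem~\ref{thm:(1+x^2)^{-1/2}}(a) and Theorem~\ref{thm:StoF}. Fix $F(s)<t<\infty$ and put $\mu=\mu_t$; this is the measure to which we apply Theorem~\ref{thm:StoF}, so we verify its three hypotheses in turn.

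First, $\mu_t$ is a solution in $V$ supported on $[0,\infty)$. This holds because the N-extremal solutions $\mu_t$ are supported on $[0,\infty)$ exactly when $F(s)\le t\le\infty$. Second, $\mu_t$ is N-extremal by definition, being one of the measures obtained from a real constant parameter $t$ in \eqref{eq:Nev}. Third, $(1+x^2)^{-1/2}d\mu_t(x)$ is indeterminate. This is Theorem~\ref{thm:(1+x^2)^{-1/2}}(a), which states that for $F(s)<t<\infty$ these measures are indet(S), and in particular indeterminate as Hamburger measures.

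With the hypotheses in place, the ``if'' direction of Theorem~\ref{thm:StoF} shows that $(1+x^2)^{-1/2}d\mu_t(x)$ is the Friedrichs solution of its moment sequence.

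There is no real obstacle here; the only point to check is that the notion of Friedrichs solution is meaningful. By Theorem~\ref{thm:(1+x^2)^{-1/2}}(a) the new moment sequence is indet(S), and it is not normalized. However, the preceding remark allows the Friedrichs and Krein solutions to be defined for non-normalized indeterminate Stieltjes sequences by scaling. So the conclusion of Theorem~\ref{thm:StoF} applies as stated.
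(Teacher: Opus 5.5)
Your proposal is correct and matches the paper's argument exactly. The paper obtains the corollary by applying the ``$\mu$ N-extremal $\Rightarrow$ Friedrichs'' direction of Theorem~\ref{thm:StoF} to $\mu_t$ for $F(s)<t<\infty$, with the indeterminacy hypothesis supplied by Theorem~\ref{thm:(1+x^2)^{-1/2}}(a), just as you do; your remark about the non-normalized moment sequence is a harmless extra check consistent with the paper's remark on that point.
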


\begin{thm}\label{thm:fund} Let $s$ be a normalized indeterminate moment sequence which is Stieltjes. 
Let $f$ be a bounded non-negative  Borel function on $[0,\infty)$ such that the measure $f(x)d\mu_{F(s)}(x)$ is indeterminate. Then it is the Friedrichs solution of the corresponding moment problem.  
\end{thm}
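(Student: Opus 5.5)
The plan is to copy the proof of Theorem~\ref{thm:StoF}: argue by contradiction, comparing with a Friedrichs solution whose support starts further to the right. Put $\mu:=\mu_{F(s)}$ and $\nu:=f(x)\,d\mu(x)$, and assume $\nu$ is indeterminate.

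\textbf{Stieltjes case.} First I check that $\nu$ belongs to an indeterminate Stieltjes problem. Since $f$ is only defined on $[0,\infty)$, $\mu$ must be supported on $[0,\infty)$. This is true by Proposition~\ref{thm:revisit} in the indet(S) case, and by the discussion of $\mu_\infty$ in the det(S) case. Hence $\nu$ is a positive measure on $[0,\infty)$ with moments of all orders. Its support is infinite, since otherwise it would be determinate. So the moment sequence of $\nu$ is an indeterminate Stieltjes sequence, and a Friedrichs solution $\sigma_F$ exists. By Proposition~\ref{thm:revisit}(ii), applied to the normalized sequence (scaling changes nothing), $\xi(\sigma_F)>\xi(\tau)$ for every other solution $\tau$. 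Therefore, if $\nu\neq\sigma_F$, then $c:=\xi(\nu)<\xi(\sigma_F)$.

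\textbf{Key comparison.} Here $c$ is a mass point of $\mu$: the support of $\nu$ is contained in the discrete set $\supp(\mu)$, and that support has a smallest point $c\ge\xi(\mu)\ge 0$. Subtracting $c$ times the moments, the measures $(x-c)\,d\nu(x)$ and $(x-c)\,d\sigma_F(x)$ have the same moments. Both are positive, because $\supp(\sigma_F)\subset[\xi(\sigma_F),\infty)$ with $\xi(\sigma_F)>c$. Let $M=\sup f$. Since $(x-c)$ vanishes at $c$,
\[
(x-c)f(x)\,d\mu(x)=(x-c)f(x)\,d[\mu-\mu(c)\delta_c](x)\le M\,x\,d[\mu-\mu(c)\delta_c](x).
\]
By Corollary~\ref{thm:Es}, $x\,d\mu_{F(s)}(x)$ is an N-extremal solution of the shifted problem. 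In the det(S) case I use instead the fact that $\C[x]$ is dense in $L^2(x\,d\mu)$, obtained via translation as in Theorem~\ref{thm:(1+x^2)^{-1/2}}(b). If $x\,d\mu$ is determinate, the measure on the right is determinate by Lemma~\ref{thm:elem}. If it is indeterminate, it is N-extremal, and removing the mass at $c$ leaves a determinate measure by Proposition~\ref{thm:supportNext}(iii); when $c=0$ there is no mass to remove and the same conclusion holds. Either way, $(x-c)\,d\nu(x)$ is determinate by Lemma~\ref{thm:elem}. But it has the same moments as $(x-c)\,d\sigma_F(x)$, and the two measures have disjoint supports, since $c$ itself carries no mass in the first. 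This is a contradiction, so $\nu=\sigma_F$.

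\textbf{Main obstacle.} The delicate point is the determinacy of $x\,d\mu_{F(s)}$ minus a point mass, uniformly across the indet(S) and det(S) cases. I would handle it through Riesz's theorem (Theorem~\ref{thm:Riesz}): density of $\C[x]$ in $L^2(x\,d\mu)$ (density index $\delta(\mu_{F(s)})=1$ from \eqref{eq:denind1}) makes $x\,d\mu$ either determinate or N-extremal. In the det(S) case I expect to need the translation argument of Theorem~\ref{thm:(1+x^2)^{-1/2}}(b): $\kappa_1(\mu)$ is the Friedrichs solution there, and Corollary~\ref{thm:Es} applied to it gives the required density, transported back by Lemma~\ref{thm:trans}. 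Boundedness of $f$ enters only through the domination by $M\,x\,d[\mu-\mu(c)\delta_c]$.
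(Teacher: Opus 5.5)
Your proposal takes a different route from the paper, and it has a gap in one subcase.

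\textbf{The two routes.} You re-run the proof of Theorem~\ref{thm:StoF} directly, with $M\,x\,d[\mu-\mu(c)\delta_c]$ as the dominating measure. The paper instead shows that $\tilde\mu:=(1+x^2)^{1/2}f\,d\mu_{F(s)}$ is indeterminate and N-extremal. This holds because $(1+x^2)^{-1/2}f\,d\mu_{F(s)}\le C(1+x^2)^{-1/2}d\mu_{F(s)}$ is determinate by Theorem~\ref{thm:(1+x^2)^{-1/2}}(a),(b), and then Theorem~\ref{thm:Riesz} applies. The paper then applies Theorem~\ref{thm:StoF} to $\tilde\mu$. In the indet(S) case your version works, since $c\ge\xi_1>0$ is an atom of the indeterminate N-extremal measure $x\,d\mu_{F(s)}$.

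\textbf{The gap: det(S) with $f(0)>0$.} Here $\mu=\mu_\infty$ has an atom at $0$, because $D(0)=0$. So $c=\xi(\nu)=0$, and your dominating measure is just $M\,x\,d\mu$, which has no atom at $c$. In the branch where $x\,d\mu$ is assumed indeterminate, your clause ``when $c=0$ there is no mass to remove and the same conclusion holds'' asserts determinacy of the very measure assumed indeterminate. Proposition~\ref{thm:supportNext}(iii) cannot be used there, and density of $\C[x]$ in $L^2(x\,d\mu)$ does not give determinacy.

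This subcase is not vacuous. $\nu$ is then an N-extremal solution with an atom at $0$. If its problem were indet(S), $\nu$ would be the Krein solution, and ruling that out is exactly what this step must do.

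\textbf{How to close it.} The missing fact is that $x\,d\mu_\infty$ is determinate in the det(S) case, and it is true. The measure $(x+1)\,d\mu(x)$ is the $\kappa_{-1}$-translate of $y\,d\kappa_1(\mu)(y)$. That measure is indeterminate and N-extremal by Corollary~\ref{thm:Es}, since $\kappa_1(\mu)$ is the Friedrichs solution of the translated problem. Hence $(x+1)\,d\mu$ is indeterminate and N-extremal with atom $\mu(\{0\})$ at $0$. So $x\,d\mu\le (x+1)\,d\mu-\mu(\{0\})\delta_0$ is determinate by Proposition~\ref{thm:supportNext}(iii) and Lemma~\ref{thm:trans}. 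Equivalently, apply Theorem~\ref{thm:newseq}(a) to the translated problem.

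A more uniform fix is to dominate $(x-c)\,d\nu$ by $M(1+x)\,d[\mu-\mu(c)\delta_c]$ in both cases. The measure $(1+x)\,d\mu$ is indeterminate and N-extremal: in the indet(S) case use Corollary~\ref{thm:Es} and $1+x\le(1+\xi_1^{-1})x$ on $\supp\mu_{F(s)}$; in the det(S) case use the translation above. It also has an atom at every point of $\supp\mu\subset[0,\infty)$.

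The paper's route avoids the issue automatically. In Theorem~\ref{thm:StoF} the dominating measure is $\tilde\mu-\tilde\mu(c)\delta_c$, and $\tilde\mu$ itself carries the atom at $c$.

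\textbf{A smaller point.} In your last step, disjointness of the supports does not follow from $c$ carrying no mass. Either use that $\nu\neq\sigma_F$ are both N-extremal (Proposition~\ref{thm:supportNext}(ii)). Or note that determinacy forces $(x-c)\,d\nu=(x-c)\,d\sigma_F$, hence $\nu=\sigma_F+\nu(\{c\})\delta_c$, and equal total masses give $\nu(\{c\})=0$, a contradiction.
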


\begin{proof} Let $C=\sup_{x\ge 0}f(x)$. By Theorem~\ref{thm:(1+x^2)^{-1/2}} the measure $(1+x^2)^{-1/2}d\mu_{F(s)}(x)$ is determinate. So is $(1+x^2)^{-1/2}f(x)d\mu_{F(s)}(x)$ as
$$
(1+x^2)^{-1/2}f(x)d\mu_{F(s)}(x)\le C(1+x^2)^{-1/2}d\mu_{F(s)}(x).
$$
The measure $(1+x^2)^{1/2}f(x)d\mu_{F(s)}(x)$ is larger than $f(x)d\mu_{F(s)}(x)$ and hence indeterminate, and it is N-extremal by Theorem~\ref{thm:Riesz}. By Theorem~\ref{thm:StoF}  we conclude that $f(x)d\mu_{F(s)}(x)$ is the Friedrichs measure of the corresponding moment problem. 
\end{proof}

In analogy with Theorem~\ref{thm:alpha} we have the following result about indeterminate Stieltjes problems.

\begin{thm}\label{thm:alphaS} Let $0<\alpha<1$ be given. There exists a Stieltjes moment sequence $s$ which is indet(S) and an N-extremal solution $\mu$ with $\xi(\mu)>0$ such that $x^{-\alpha}d\mu(x)$ is determinate. Such a measure $\mu$ is the Friedrichs solution and belongs to $B(\alpha/2)$. In particular $a(\mu)\le \alpha/2$.
\end{thm}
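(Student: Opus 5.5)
The plan is to imitate the contradiction argument in the proof of Theorem~\ref{thm:alpha}, with the weight $x^{-1}$ and Theorem~\ref{thm:-1} (a) taking over the role of $(1+x^2)^{-1}$ and Theorem~\ref{thm:Riesz}. Fix $0<\alpha<1$ and call $\mu$ \emph{admissible} if it is an N-extremal solution of some indet(S) Stieltjes moment sequence with $\xi(\mu)>0$. Suppose, for contradiction, that $x^{-\alpha}d\mu(x)$ is indeterminate for every admissible $\mu$.

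\textbf{Step 1: the class is stable under $\mu\mapsto x^{-\alpha}d\mu(x)$.} Let $\mu$ be admissible with $c=\xi(\mu)>0$, and put $\nu=x^{-\alpha}d\mu(x)$. By the hypothesis, $\nu$ is indeterminate. On $\supp(\mu)\subseteq[c,\infty)$ we have $\nu\le c^{-\alpha}\mu$, so $\C[x]$ is dense in $L^2(\nu)$ by Lemma~\ref{thm:elem} (ii). Hence $\nu$ is N-extremal by Theorem~\ref{thm:Riesz}. Moreover $\xi(\nu)=c>0$.

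\textbf{Step 2: the iterate is indet(S).} I must still show that the moment sequence of $\nu$ is indet(S). Suppose instead it were det(S). Then its unique Stieltjes solution is the N-extremal solution concentrated in the zeros of $D$. Since $D(0)=0$ (from the representation of $D$ through the $q_n$), that solution has a mass at $0$. This is impossible, because $\nu$ is itself a Stieltjes solution with $\xi(\nu)>0$. I expect this to be the only delicate point: it requires checking that $0$ is always a zero of $D$, and that in the det(S) case the solution $\mu_\infty$ really charges $0$.

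\textbf{Step 3: iterate and contradict.} Start from $\mu=\mu_{F(s)}$ for any indet(S) sequence $s$. It is admissible by Proposition~\ref{thm:revisit} (ii). By Steps 1 and 2, all the iterates $x^{-k\alpha}d\mu_F(x)$ are admissible, and in particular indeterminate. Choose $k$ with $k\alpha\ge1$. For $x\ge\xi_1$ we have $x^{-k\alpha}\le \xi_1^{1-k\alpha}x^{-1}$. Therefore $x^{-1}d\mu_F(x)$ majorizes a constant multiple of an indeterminate measure, and so it is indeterminate by Lemma~\ref{thm:elem} (i). This contradicts Theorem~\ref{thm:-1} (a). Hence some admissible $\mu$ has $x^{-\alpha}d\mu(x)$ determinate, which proves existence.

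\textbf{Step 4: the remaining claims.} Let $\mu$ be admissible with $x^{-\alpha}d\mu(x)$ determinate, and put $c=\xi(\mu)$. On $[c,\infty)$ we have $x^{-1}\le c^{\alpha-1}x^{-\alpha}$, so $x^{-1}d\mu(x)$ is determinate. Now $\mu$ is one of the N-extremal Stieltjes solutions $\mu_t$ with $F(s)\le t\le\infty$. The case $t=\infty$ is excluded, since $\mu_K$ has mass at $0$. For $F(s)<t<\infty$, Theorem~\ref{thm:-1} (a) says $x^{-1}d\mu_t(x)$ is indeterminate. Hence $\mu=\mu_{F(s)}$ is the Friedrichs solution. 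Finally, $(1+x^2)^{\alpha/2}\ge x^{\alpha}$ gives $(1+x^2)^{-\alpha/2}d\mu(x)\le x^{-\alpha}d\mu(x)$, so $(1+x^2)^{-\alpha/2}d\mu(x)$ is determinate by Lemma~\ref{thm:elem} (i). Since $\mu$ itself is indeterminate, $\mu\in B(\alpha/2)$. Then $a(\mu)\le\alpha/2$ follows from \eqref{eq:a}.
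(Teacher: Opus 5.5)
Your proof is correct and is essentially the paper's argument. You use the same contradiction obtained by iterating $\mu\mapsto x^{-\alpha}d\mu(x)$ inside the class of N-extremal indet(S) solutions with $\xi(\mu)>0$, and then Theorem~\ref{thm:-1} to identify $\mu$ as $\mu_F$. Your Step 2 is exactly the paper's remark that the iterate is ``necessarily indet(S) since $\xi>0$''. The point you flag is fine, because $D(z)=z\sum_k p_k(0)p_k(z)$ vanishes at $0$.

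The only difference is where the contradiction lands. You start from $\mu_F$ and contradict Theorem~\ref{thm:-1}(a) once $k\alpha\ge 1$. The paper instead takes any admissible $\mu$, iterates until $k\alpha\ge 2$, and contradicts Theorem~\ref{thm:Riesz} via $(1+x^2)^{-1}d\mu(x)$. You also spell out the $B(\alpha/2)$ membership via $(1+x^2)^{-\alpha/2}\le x^{-\alpha}$, which the paper leaves implicit.
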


\begin{proof} Assuming that the result does not hold, we get that for any indet(S) moment problem and any N-extremal solution $\mu$ with $\xi(\mu)>0$ the following is true:
$$
x^{-\alpha}d\mu(x) \;\mbox{is indeterminate},
$$
and necessarily indet(S) since $\xi(x^{-\alpha}d\mu(x))>0$.

As $x^{-\alpha}$ is bounded above on $[c,\infty)$ for any $c>0$, the measure $x^{-\alpha}d\mu(x)$ is also N-extremal with the same support as $\mu$. By iteration we get that $x^{-k\alpha}d\mu(x)$ is indeterminate and N-extremal for any $k\in\N$. For $k$ so large that $k\alpha\ge 2$ this implies that $(1+x^2)^{-1}d\mu(x)$ is indeterminate in contradiction to Theorem~\ref{thm:Riesz}.

Since $x^{-1}d\mu(x)$  is majorized by a multiple of $x^{-\alpha}d\mu(x)$, also $x^{-1}d\mu(x)$ is determinate. By Theorem~\ref{thm:-1} $\mu$ must be the Friedrichs solution.
\end{proof}

Combining Theorem~\ref{thm:alphaS} with Theorem~\ref{thm:(1+x^2)^{-1/2}} we get:

\begin{cor}\label{thm:AB} Let $\varepsilon>0$ be arbitrary. There exists a normalized  indeterminate  Stieltjes moment sequence $s$ which is indet(S) such that $a(\mu_F)<\varepsilon$ while  $a(\mu_t)\ge 1/2$ for $F(s)<t\le \infty$.
\end{cor}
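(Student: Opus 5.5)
Given $\varepsilon>0$, I will choose $0<\alpha<\min(1,2\varepsilon)$ and invoke Theorem~\ref{thm:alphaS}. This produces a normalized Stieltjes moment sequence $s$ which is indet(S), together with an N-extremal solution $\mu$ with $\xi(\mu)>0$ such that $x^{-\alpha}d\mu(x)$ is determinate. The same theorem identifies $\mu$ as the Friedrichs solution $\mu_F=\mu_{F(s)}$ and places it in $B(\alpha/2)$. By the definition \eqref{eq:a} of $a(\mu)$, membership $\mu_F\in B(\alpha/2)$ gives $a(\mu_F)\le\alpha/2<\varepsilon$. If $a(\mu_F)<\varepsilon$ is wanted strictly and $\alpha/2$ might equal $\varepsilon$, the choice $\alpha<2\varepsilon$ already handles it.

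For the remaining N-extremal solutions I will apply Theorem~\ref{thm:(1+x^2)^{-1/2}} to the same sequence $s$. Since $s$ is indet(S), part (a) shows that $(1+x^2)^{-1/2}d\mu_t(x)$ is indeterminate for $F(s)<t<\infty$. Part (a) also shows that $(1+x^2)^{-1/2}d\mu_K(x)$ is indeterminate, which covers $t=\infty$ because $\mu_K=\mu_\infty$. Hence $\mu_t\in A(\frac12)$ for all $F(s)<t\le\infty$. The supremum in \eqref{eq:a} therefore gives $a(\mu_t)\ge 1/2$, which is exactly the concluding statement of that theorem.

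The only point needing care is that the sequence $s$ and the Friedrichs solution from Theorem~\ref{thm:alphaS} are the same objects used in Theorem~\ref{thm:(1+x^2)^{-1/2}}. Both theorems concern an indet(S) normalized sequence, and the measure $\mu$ of Theorem~\ref{thm:alphaS} is shown there to equal $\mu_F$. So no real obstacle remains beyond matching the quantifiers.
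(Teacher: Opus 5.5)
Your proposal is correct and is essentially the paper's argument, which obtains the corollary by combining Theorem~\ref{thm:alphaS} and Theorem~\ref{thm:(1+x^2)^{-1/2}}. The first, with $\alpha<2\varepsilon$, gives $\mu_F\in B(\alpha/2)$ and hence $a(\mu_F)<\varepsilon$; part (a) of the second gives $\mu_t\in A(\frac12)$ and hence $a(\mu_t)\ge 1/2$ for $F(s)<t\le\infty$.
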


We shall now see that there exist Friedrichs solutions $\mu_F$ such that $a(\mu_F)$ is arbitrarily close to $1/2$.

\begin{prop}\label{thm:closehalf} For any $0<\delta<1/2$ there exists a Friedrichs solution
$\mu_F$ to an indet(S) Stieltjes moment problem  with $a(\mu_F)\in[1/2-\delta,1/2]$.
\end{prop}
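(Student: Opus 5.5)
The plan is to take any Friedrichs solution and push its index $a$ up into $[1/2-\delta,1/2)$ by multiplying with a power of $1+x^2$. Proposition~\ref{thm:shift}(a) controls the new value of $a$. The dichotomy in Theorem~\ref{thm:(1+x^2)^{-1/2}} then forces the new measure to be a Friedrichs solution again.

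First, fix any normalized Stieltjes moment sequence $s$ which is indet(S), and let $\mu_F=\mu_{F(s)}$ be its Friedrichs solution. By Theorem~\ref{thm:(1+x^2)^{-1/2}}(a) we have $\mu_F\in B(\frac12)$, so $a_0:=a(\mu_F)\le 1/2$. If $a_0\ge 1/2-\delta$ we are done. Otherwise $a_0<1/2-\delta$, and we choose $\gamma>0$ with $a_0+\gamma\in[1/2-\delta,1/2)$. Then $\gamma<1-a_0$, so Proposition~\ref{thm:shift}(a) applies to $\nu:=(1+x^2)^{\gamma}d\mu_F(x)$ and gives $a(\nu)=a_0+\gamma\in[1/2-\delta,1/2)$. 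As shown in that proof, $\nu\in\mathcal M_{i,0}^*$: it is indeterminate because it majorizes $\mu_F$, and the polynomials are dense in $L^2(\nu)$.

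It remains to show that $\nu$, after normalizing to total mass one (which changes neither $a$ nor the Friedrichs property), is the Friedrichs solution of its own moment problem $s'$. Since $\supp(\nu)=\supp(\mu_F)\subset[\xi_1,\infty)$ with $\xi_1>0$ by Proposition~\ref{thm:revisit}(ii), $s'$ is an indeterminate Stieltjes sequence. It is indet(S), for the following reason. In the det(S) case the only Stieltjes solution is the N-extremal measure $\mu_\infty$ carried by the zeros of $D$. Since $D(0)=0$ (from the representation of $D$ as $z$ times a series in $q_k(0)q_k(z)$), that measure has a mass at $0$, whereas $\nu$ is a Stieltjes solution with no mass at $0$.

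Since $\nu$ is N-extremal and supported in $[0,\infty)$, it equals $\mu'_t$ for some $t\in[F(s'),\infty]$ in the Nevanlinna parametrization of $s'$. If $t\neq F(s')$, then Theorem~\ref{thm:(1+x^2)^{-1/2}} (last statement) gives $\nu\in A(\frac12)$, hence $a(\nu)\ge 1/2$. This contradicts $a(\nu)<1/2$. Hence $\nu$ is the Friedrichs solution of $s'$, with $a(\nu)\in[1/2-\delta,1/2]$.

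The only delicate step is checking that $s'$ is indet(S) rather than det(S), since otherwise the Friedrichs/Krein dichotomy of Theorem~\ref{thm:(1+x^2)^{-1/2}}(a) would not be the relevant case. This is handled by the observation that $0$ lies in the support of $\mu_\infty$ while $\xi(\nu)=\xi_1>0$.
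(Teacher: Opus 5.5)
Your argument is correct and uses the same two ingredients as the paper's proof. The first is Proposition~\ref{thm:shift}(a), which raises $a$ by multiplying with a power of $1+x^2$. The second is the last assertion of Theorem~\ref{thm:(1+x^2)^{-1/2}}, which forces an N-extremal Stieltjes solution with $a<1/2$ to be the Friedrichs solution.

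The paper phrases this as a proof by contradiction that iterates a fixed shift $(1+x^2)^{\delta}$. You instead choose the exponent $\gamma$ so that a single shift lands in $[1/2-\delta,1/2)$, and you also check explicitly that the new problem is indet(S), which the paper leaves implicit. One small slip there: $D(z)=z\sum_k p_k(0)p_k(z)$, while the series in $q_k(0)q_k(z)$ is $A$; your conclusion $D(0)=0$ is still correct.
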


\begin{proof} Assume for contradiction that the result does not hold. Then there exists
$0<\delta<1/2$ such that for all Friedrichs solutions $\mu$ to indet(S) Stieltjes moment problems we have $a(\mu)<1/2-\delta$. Starting from such a Friedrichs solution $\mu$ we shall construct another Friedrichs solution $\mu_1$  with $a(\mu_1)=\delta+a(\mu)$.

Since $a(\mu)<1/2-\delta<1-\delta$, we get by Proposition~\ref{thm:shift} that 
$\mu_1:=(1+x^2)^\delta d\mu(x)$ belongs to $\mathcal M_{i,0}^*$, has the same support as $\mu$  and satisfies $a(\mu_1)=\delta+a(\mu)<1/2$.
The last inequality shows that $\mu_1$ is a Friedrichs solutions and hence by our assumption
$a(\mu_1)=\delta+a(\mu)<1/2-\delta$, i.e., $a(\mu)<1/2-2\delta$. Repeating the argument leads to $a(\mu)<1/2-2^k\delta, k=1,2,\ldots$ which is not possible as $a(\mu)\ge 0$.
\end{proof}

\section{Examples of Friedrichs and Krein solutions}

\begin{ex}{\rm \textbf{Quartic rates.}
Transforming the moment problem for quartic birth and death rates in \cite{B:V} by the mapping $\tau_a(x)=ax$, where $a=(K_0/\pi)^4$, $K_0=\Gamma^2(1/4)/(4\sqrt{\pi})$,
we  get a normalized Friedrichs solution of the form
\begin{equation}\label{eq:Fr}
\mu_F=\frac{4\pi}{K_0^2}\sum_{k=0}^\infty \frac{(2k+1)\pi}{\sinh((2k+1)\pi)}\delta_{(2k+1)^4}.
\end{equation}
The corresponding Krein solution is
\begin{equation}\label{eq:Kr}
\mu_K=\frac{\pi}{K_0^2}\delta_0+\frac{4\pi}{K_0^2}\sum_{k=1}^\infty \frac{2k\pi}{\sinh(2k\pi)}\delta_{(2k)^4}.
\end{equation}
We know from Theorem~\ref{thm:(1+x^2)^{-1/2}} that $\mu_F\in B(\frac12), \mu_K\in A(\frac12)$.

Consider the measures
\begin{equation}\label{eq:c}
\mu_c=\sum_{k=0}^\infty \frac{(2k+1)\pi}{\sinh((2k+1)\pi)}(2k+1)^c\delta_{(2k+1)^4}.
\end{equation}

From Corollary~\ref{thm:Es}  we know that
$\mu_0, \mu_4$ are indeterminate and N-extremal, while $\mu_{-4}$ is determinate by Theorem~\ref{thm:-1}.

From this we clearly get that $\mu_c$ is indeterminate for $c\ge 0$ and  determinate for $c\le -4$. We are not able to decide for which value of $c\in (-4,0)$ $\mu_c$ changes from indeterminate to determinate.
}
\end{ex}

In the next examples we need the notation from the $q$-calculus of \cite{G:R}.
For a fixed number $0<q<1$ and $z\in\C$ we define
\begin{equation}\label{eq:qcalc}
(z;q)_0=1,\quad (z;q)_n=\prod_{k=0}^{n-1}(1-zq^k),\quad n=1, 2, \ldots, \infty.
\end{equation}
The Gaussian binomial coefficient is defined as
\begin{equation}\label{eq:Gauss}
\left[\begin{array}{ll} n\\k\end{array}\right]_q=\frac{(q;q)_n}{(q;q)_k(q;q)_{n-k}},\quad 0\le k\le n<\infty.
\end{equation}

\begin{ex}{\rm \textbf{Al-Salam--Carlitz moment problem.}
Assume that $0<q<1<a<1/q,$ which is the case of the Al-Salam--Carlitz moment problem in the indet(S) case, cf. \cite[Proposition 4.5.1]{B:V}.

The Friedrichs solution is given as
$$
\mu_F=(q/a;q)_\infty\sum_{k=0}^\infty \frac{a^{-k}q^{k^2}}{(q/a;q)_k(q;q)_k}\delta_{aq^{-k}-1},
$$
while the Krein solution is given as
$$ 
\mu_K=(aq;q)_\infty\sum_{k=0}^\infty \frac{a^{k}q^{k^2}}{(aq;q)_k(q;q)_k}\delta_{q^{-k}-1}.
$$
The Friedrichs parameter is given by
$$
F(s)=(q;q)_\infty\sum_{k=0}^\infty \frac{q^k}{(a-q^k)(q;q)_k},
$$
cf. \cite[formula (4.18)]{B:V}.
  We have $\mu_F\in B(\frac12)$ and $\mu_K\in A(\frac12)$.
}
\end{ex}

\begin{ex}{\rm \textbf{Stieltjes--Wigert moment problem.} We refer  to \cite{Chr} for this moment problem. For $0<q<1$ we have the density
$$
v_q(x)=\frac{q^{1/8}}{\sqrt{2\pi\log(1/q)}} x^{-1/2}\exp\left(-\frac{\log^2(x)}{2\log(1/q)}\right), x>0
$$
with moments
\begin{equation}\label{eq:SW}
s_n=\int_0^\infty x^n v_q(x)\,dx=q^{-n(n+1)/2}, n=0,1,\ldots,
\end{equation}
cf. \cite[formula (2.1)]{Chr}. 

To describe the Friedrichs and Krein solutions we need Ramanujan's function
\begin{equation}\label{eq:Ram}
\Phi(x)=\sum_{k=0}^\infty (-1)^k\frac{q^{k^2}}{(q;q)_k}x^k,
\end{equation}
The zeros of $\Phi$ form the sequence $\xi_1<\xi_2<\ldots$ described in Proposition~\ref{thm:revisit} as limits of the zeros of the orthonormal Stieltjes--Wigert polynomials
$$
p_n(x)=(-1)^n\sqrt{q^n/(q;q)_n}\sum_{k=0}^n \left[\begin{array}{ll} n\\k\end{array}\right]_q(-1)^k q^{k^2}x^k,
$$
cf. \cite[formula (2.4)]{Chr}. Detailed information about the zeros $(\xi_n)_{n\ge 1}$ can be found in \cite{I:Z}, where $\xi_n$ is denoted $i_n(q)$.

The Friedrichs parameter is given as $F(s)=1-(q;q)_\infty$, cf. \cite[formula (3.13)]{Chr}.
Three N-extremal solutions $\mu_t$ are known corresponding to the parameters $t=F(s)$ (the Friedrichs solution $\mu_F$), $t=1$ and $t=\infty$ (the Krein solution $\mu_K$)
concentrated in the sequences $(\xi_n)_{n\ge 1}$, $(q\xi_n)_{n\ge 1}$ and $0, (\xi_n/q)_{n\ge 1}$ respectively. 

It is worth recalling that the zeros $\xi_n$ are well separated in the sense that $\xi_{n+1}/\xi_n>q^{-2}$. 

We also recall that the mass at a  point $x_0\in\R$ of an N-extremal solution is given as $\rho(x_0)$, where
$$
1/\rho(x_0)=\sum_{n=0}^\infty p_n^2(x_0), 
$$
and therefore
$$
\mu_F=\sum_{k=1}^\infty \rho(\xi_k)\delta_{\xi_k},\quad \mu_1=\sum_{k=1}^\infty \rho(q\xi_k)\delta_{q\xi_k}
$$
and
$$
\mu_K=\rho(0)\delta_0+\sum_{k=1}^\infty \rho(\xi_k/q)\delta_{\xi_k/q}.
$$
We know that $\mu_F\in B(\frac12)$ and $\mu_1, \mu_K\in A(\frac12)$.

}
\end{ex}

\noindent
Christian Berg\\
Department of Mathematical Sciences, University of Copenhagen\\
Universitetsparken 5, DK-2100 Copenhagen, Denmark\\
e-mail: {\tt{berg@math.ku.dk}}

\vspace{0.4cm}
\noindent
Ryszard Szwarc\\
Institute of Mathematics, University of Wroc{\l}aw\\
pl.\ Grunwaldzki 2/4, 50-384 Wroc{\l}aw, Poland\\ 
e-mail: {\tt{szwarc2@gmail.com}}

\end{document}